\newtheorem{theorem}{Theorem}
\newtheorem{defn}{Definition}
\newtheorem{lemma}{Lemma}
\newtheorem{proposition}{Proposition}
\newtheorem{cor}{Corollary}
\newtheorem{remark}{Remark}
\newcommand{\RR}{{\mathbb R}}
\newcommand{\CC}{\mathbb{C}}
\newcommand{\per}{\text{per }}
\newcommand{\refpart}[1]{{\it (#1)}}
\definecolor{colB}{HTML}{6699CC}
\definecolor{colbg}{HTML}{23373b}
\tikzstyle{vertex}=[circle, draw, fill=black, inner sep=0pt, minimum size=4pt]
\tikzstyle{edge}=[line width=1.5pt,black!50!white]
\tikzstyle{redge}=[line width=1.5pt,Red]
\tikzstyle{bledge}=[line width=1.5pt,black]
\tikzstyle{bedge}=[line width=1.5pt,NavyBlue,dashed]
\tikzstyle{ovedge}=[line width=1.5pt,OliveGreen]
\tikzstyle{cedge}=[line width=1.5pt, cyan]
\tikzstyle{cvertex}=[circle, thick, draw=colbg, fill=colB, inner sep=0pt, minimum size=5pt]
\newcommand{\Pseudo}{
	\begin{tikzpicture}[scale=.4]
	\node[vertex, label=right:4] (3) at (6.2, 3) {4};
	\node[vertex, label=right:3] (4) at (5.2, -1) {3};
	\node[vertex, label=above:5] (5) at (4.8,1.7) {5};
	\node[vertex,label=below:6] (6) at (3.6,2.75*0.45)  {6};
	\node[vertex,label=left:1] (7) at (-1.1,2.75*0.5) {1};
	\node[vertex, label=left:2] (8) at (-2.3,0.2) {2};
	\draw[edge] (5)edge(6)  (5)edge(3)  ;
	\draw[edge] (6)edge(7) (8)edge(7) (8)edge(4) (3)edge(4) (6)edge(4);
	
	\node (14) at (5.2, -2.5) {};
	\node (27) at (-1.1, 2.75*0.5+1.5) {};
	\node (28) at (-2.3, -1.3) {};
	\node (23) at (4.7, 3) {};
	\node (25) at (4.8, 0.3) {};
	
	\draw [-to,shorten >=-1pt,red, very thick] (5.25,3) to (5.22,3);
	\draw[redge] (3) to (23);

	\draw [-to,shorten >=-1pt,red, very thick] (5.2, -1.92) to (5.2, -1.95);
	\draw[redge] (4) to (14);
	
	\draw [-to,shorten >=-1pt,red, very thick]  (4.8, 0.85) to (4.8, 0.82);
	\draw[redge] (5) to (25);
	
	\draw [-to,shorten >=-1pt,red, very thick]  (-1.1,2.75*0.5+0.92) to (-1.1,2.75*0.5+0.95) ;
	\draw[redge] (7) to (27);

	\draw [-to,shorten >=-1pt,red, very thick] (-2.3,-0.73) to (-2.3,-0.75);
	\draw[redge] (8) to (28);
	
	\end{tikzpicture}
}
\newcommand{\DisCon}{
	\begin{tikzpicture}[scale=0.3]
	\node[vertex] (1f) at (0, 0) {};
	\node[vertex] (2f) at (0, 6) {};
	
	\node[vertex] (11t) at (1, 1.5) {};      
	\node[vertex] (12t) at (1,4.5) {};  
	\node[vertex] (13t) at (4.5,3) {}; 
	
	\node[vertex] (21t) at (10.5, 1.5) {};      
	\node[vertex] (22t) at (10.5,4.5) {};  
	\node[vertex] (23t) at (7,3) {};
	\node[vertex] (24t) at (14,3) {};  
	
	\draw[bedge] (1f) to (2f);
	
	\draw[redge] (1f) to (11t);
	\draw[redge] (1f) to (21t);
	\draw[redge] (1f) to (13t);	
	
	\draw[redge] (2f) to (12t);
	\draw[redge] (2f) to (22t);
	\draw[redge] (2f) to (23t);
	
	\draw[edge] (11t) to (12t);
	\draw[edge] (11t) to (13t);
	\draw[edge] (13t) to (12t);	
	
	\draw[edge] (21t) to (22t);
	\draw[edge] (21t) to (23t);
	\draw[edge] (23t) to (22t);	
	\draw[edge] (21t) to (24t);
	\draw[edge] (24t) to (22t);	
	
	\draw[colB,very thick,->] (16,3) -- (18,3);
	
	\begin{scope}[xshift=20cm]
	
	\node[vertex] (11t) at (1, 1.5) {};      
	\node[vertex] (12t) at (1,4.5) {};  
	\node[vertex] (13t) at (4.5,3) {}; 
	
	\draw[edge] (11t) to (12t);
	\draw[edge] (11t) to (13t);
	\draw[edge] (13t) to (12t);	
	
	\draw[redge] (11t) to (1,0.5);
	\draw [-to,shorten >=-1pt,red, very thick] (1,.82) to (1,.8);
	
	\draw[redge] (12t) to (1,5.5);
	\draw [-to,shorten >=-1pt,red, very thick] (1,5.2) to (1,5.22);
	
	\draw[redge] (13t) to (5.5,3);
	\draw [-to,shorten >=-1pt,red, very thick] (5.2,3) to (5.22,3);
	
	\end{scope}
	
	\begin{scope}[xshift=29.5cm]
	
	\node[vertex] (11t) at (1, 1.5) {};      
	\node[vertex] (12t) at (1,4.5) {};  
	\node[vertex] (13t) at (-2.5,3) {}; 
	\node[vertex] (14t) at (4.5,3) {}; 
	
	\draw[edge] (11t) to (12t);
	\draw[edge] (11t) to (13t);
	\draw[edge] (13t) to (12t);	
	\draw[edge] (11t) to (14t);
	\draw[edge] (12t) to (14t);
	
	\draw[redge] (11t) to (1,0.5);
	\draw [-to,shorten >=-1pt,red, very thick] (1,.82) to (1,.8);
	
	\draw[redge] (12t) to (1,5.5);
	\draw [-to,shorten >=-1pt,red, very thick] (1,5.2) to (1,5.22);
	
	\draw[redge] (13t) to (-3.5,3);
	\draw [-to,shorten >=-1pt,red, very thick] (-3.2,3) to (-3.22,3);
	
	\end{scope}
	
	\end{tikzpicture}
}
\newcommand{\LDiff}{
	\begin{tikzpicture}[scale=0.3]
	
	\node[vertex] (1) at (-2.8, -1){};
	\node[vertex] (2) at (-2.8, 3) {};
	\node[vertex] (5) at (5.2, 3) {};
	\node[vertex] (6) at (5.2, -1) {};
	\node[vertex] (3) at (1.2,0.1) {};
	\node[vertex] (4) at (1.2,1.9)  {};
	
	\draw[bedge] (1) to (2);
	\draw[redge]   (2)edge(3)  (1)edge(4)   (1)edge(6)  (2)edge(5) ;
	\draw[edge] (3)edge(4)  (3)edge(6)  ;
	\draw[edge] (4)edge(5) (5)edge(6)  ;
	
	\begin{scope}[yshift=6cm]
	
	\node[vertex] (1) at (-2.8, -1){};
	\node[vertex] (2) at (-2.8, 3) {};
	\node[vertex] (3) at (5.2, 3) {};
	\node[vertex] (4) at (5.2, -1) {};
	\node[vertex] (5) at (1.2,0.3) {};
	\node[vertex] (6) at (1.2,1.7)  {};
	
	\draw[bedge] (1) to (2);
	\draw[redge]   (2)edge(3)  (1)edge(4)   (1)edge(5)  (2)edge(6) ;
	\draw[edge] (5)edge(6)  (5)edge(4)  ;
	\draw[edge] (6)edge(3) (3)edge(4)  ;
	
	\end{scope}
	
	\draw[colB,very thick,->] (7,1.5) -- (9.5,3.3);
	\draw[colB,very thick,->] (7,6) -- (9.5,4.2);
	
	\begin{scope}[yshift=3cm,xshift=13.5cm,scale=0.8]
	\node[vertex] (3) at (-2.8, -1){};
	\node[vertex] (4) at (-2.8, 3) {};
	\node[vertex] (5) at (5.2, 3) {};
	\node[vertex] (6) at (5.2, -1) {};
	
	\draw[edge] (3)edge(4)  (4)edge(5)  ;
	\draw[edge] (6)edge(5) (6)edge(3);
	
	\draw [-to,shorten >=-1pt,red, very thick] (-2.8, -2.1) to (-2.8,-2.13);
	\draw[redge] (3) to (-2.8, -2.5);
	
	\draw [-to,shorten >=-1pt,red, very thick] (-2.8, 4.1) to (-2.8,4.13);
	\draw[redge] (4) to (-2.8, 4.5);
	
	\draw [-to,shorten >=-1pt,red, very thick] (5.2, 4.1) to (5.2,4.13);
	\draw[redge] (5) to (5.2, 4.5);
	
	\draw [-to,shorten >=-1pt,red, very thick] (5.2, -2.1) to (5.2,-2.13);
	\draw[redge] (6) to (5.2, -2.5);
	
	\end{scope}
	
	\end{tikzpicture}
}
\newcommand{\Destr}{
	\begin{tikzpicture}[scale=0.3]
	
	\node[vertex] (1) at (-2.8, -1){};
	\node[vertex] (2) at (-2.8, 3) {};
	\node[vertex] (3) at (5.2, 3) {};
	\node[vertex] (4) at (5.2, -1) {};
	\node[vertex] (5) at (1.2,0.3) {};
	\node[vertex] (6) at (1.2,1.7)  {};
	
	\draw[bedge] (2) to (3);
	\draw[redge]   (2)edge(1)    (2)edge(6) (6)edge(3) (3)edge(4);
	\draw[edge] (5)edge(6)  (5)edge(4)  ;
	\draw[edge] (1)edge(4)   (1)edge(5)  ;
	
	\draw[colB,very thick,->] (1.2,-1.5) -- (1.2,-3.3);
	
	\begin{scope}[yshift=-6.5cm]
	\node[vertex] (1) at (-2.8, -1){};
	\node[vertex] (4) at (5.2, -1) {};
	\node[vertex] (5) at (1.2,0.3) {};
	\node[vertex] (6) at (1.2,1.7)  {};
	
	\draw[edge] (5)edge(6)  (5)edge(4)  ;
	\draw[edge] (1)edge(4)   (1)edge(5)  ;
	
	\draw [-to,shorten >=-1pt,red, very thick] (-2.8, -2.1) to (-2.8,-2.13);
	\draw[redge] (1) to (-2.8, -2.5);
	
	\draw [-to,shorten >=-1pt,red, very thick] (5.2, -2.1) to (5.2,-2.13);
	\draw[redge] (4) to (5.2, -2.5);
	
	\draw [-to,shorten >=-1pt,red, very thick] (0.1,1.7) to (0.07,1.7);
	\draw[redge] (6) to (-0.3,1.7);
	\draw [-to,shorten >=-1pt,red, very thick] (2.3,1.7) to (2.33,1.7);
	\draw[redge] (6) to (2.7,1.7);

	\end{scope}
	\end{tikzpicture}
}
\newcommand{\EliiiH}{
	\begin{tikzpicture}[scale=0.3]
	
	\node[vertex] (2) at (1, 1) {};
	\node[vertex] (3) at (-1, -2) {};      
	\node[cvertex] (4) at (0,0.0) {};  
	
	\draw[edge] (2) to (4);
	\draw[edge] (3) to (4);
	\draw[gray!60,densely dashed] (0,0) circle (2.5);
	
	\draw [-to,shorten >=-1pt,red, very thick] (-0.75,0.75) to (-.78,.78);
	\draw[redge] (4) to (-1,1);
	
	\draw[colB,very thick,->] (3,0.5) -- (5,1.5);
	\draw[colB,very thick,->] (3,-0.5) -- (5,-1.5);	
	
	\begin{scope}[xshift=7.5cm]
	
	\draw[gray!60,densely dashed] (0,2) circle (1.5);
	
	\node[vertex] (w2) at (0.5, 2.5) {};
	\node[vertex] (w3) at (-0.5, 1) {};      
	
	\draw [-to,shorten >=-1pt,red, very thick] (-0.1,1.9) to (-.12,1.88);
	\draw[redge] (w2) to (-0.3,1.7);
	
	\draw[gray!60,densely dashed] (0,-2) circle (1.5);
	
	\node[vertex] (z2) at (0.5, -1.5) {};
	\node[vertex] (z3) at (-0.5, -3) {};      
	
	\draw[redge] (z3) to (0.3,-2.2);
	\draw [-to,shorten >=-1pt,red, very thick] (0.1,-2.42) to (0.12,-2.4);
	
	\end{scope}
	\end{tikzpicture}
}
\newcommand{\Eliii}{
	\begin{tikzpicture}[scale=0.3]
	\node[vertex] (1) at (-1.5, 1.5) {};
	\node[vertex] (2) at (1, 1) {};
	\node[vertex] (3) at (-1, -2) {};      
	\node[cvertex] (4) at (0,0.0) {};  
	\draw[edge] (1) to (4);
	\draw[edge] (2) to (4);
	\draw[edge] (3) to (4);
	\draw[gray!60,densely dashed] (0,0) circle (2.5);
	
	\draw[colB,very thick,->] (3,0) -- (5,0);
	\draw[colB,very thick,->] (3,2) -- (5,3);
	\draw[colB,very thick,->] (3,-2) -- (5,-3);

	\begin{scope}[xshift=7.5cm]
	\draw[gray!60,densely dashed] (0,0) circle (1.5);
	\node[vertex] (u1) at (-.7, .7) {};
	\node[vertex] (u2) at (0.5, 0.5) {};
	\node[vertex] (u3) at (-0.5, -1) {};      
	
	\draw [-to,shorten >=-1pt,red, very thick] (-0.2,.2) to (-0.18,.18);
	\draw[redge] (u1) to (-0,0);
	
	\end{scope}
	
	\begin{scope}[xshift=7.5cm, yshift=4cm]
	\draw[gray!60,densely dashed] (0,0) circle (1.5);
	\node[vertex] (u1) at (-.7, .7) {};
	\node[vertex] (u2) at (0.5, 0.5) {};
	\node[vertex] (u3) at (-0.5, -1) {};      
	
	\draw [-to,shorten >=-1pt,red, very thick] (-0.02,-.02) to (-0.04,-.04);
	\draw[redge] (u2) to (-0.2,-0.2);
	
	\end{scope}
	
	\begin{scope}[xshift=7.5cm, yshift=-4cm]
	\draw[gray!60,densely dashed] (0,0) circle (1.5);
	\node[vertex] (u1) at (-.7, .7) {};
	\node[vertex] (u2) at (0.5, 0.5) {};
	\node[vertex] (u3) at (-0.5, -1) {};      
	
	\draw [-to,shorten >=-1pt,red, very thick] (0.02,-.48) to (0.04,-.46);
	\draw[redge] (u3) to (0.2,-0.3);
	
	\end{scope}

	\end{tikzpicture}
}
\newcommand{\Elpath}{
	\begin{tikzpicture}[scale=0.27]
	
	\node[vertex] (b) at (3, 2) {};
	\node[vertex] (a) at (-2, -3) {};  
	\node[cvertex] (1) at (-1,-1) {};      
	\node[cvertex] (2) at (0,0.0) {};  
	\node[cvertex] (3) at (1,1) {};  

	\draw[edge] (1) to (2);
	\draw[edge] (2) to (3);

	\draw[edge] (1) to (a);
	\draw[edge] (3) to (b);
	\draw[gray!60,densely dashed] (0,0) circle (4.0);
	
	\draw [-to,shorten >=-1pt,red, very thick] (-1.78,-0.22) to (-1.8,-.2);
	\draw[redge] (1) to (-2,0);
	
	\draw [-to,shorten >=-1pt,red, very thick] (-0.78,.78) to (-.8,.8);
	\draw[redge] (2) to (-1,1);
	
	\draw [-to,shorten >=-1pt,red, very thick] (0.22,1.78) to (0.2,1.8);
	\draw[redge] (3) to (0,2);
	
	\draw[colB,very thick,->] (4.5,0.5) -- (6.5,1.5);
	\draw[colB,very thick,->] (4.5,-0.5) -- (6.5,-1.5);	
	
	\begin{scope}[xshift=9.5cm,yshift=3cm, scale=2.5/4]
	
	\node[vertex] (b) at (3, 2) {};
	\node[vertex] (a) at (-1, -3.5) {};  
	\node[cvertex] (1) at (-1,-1) {}; 
	\node[cvertex] (3) at (1,1) {};  	
	
	\draw[edge] (1) to (3);
	\draw[edge] (3) to (b);
	\draw[gray!60,densely dashed] (0,0) circle (4.0);
	
	\draw [-to,shorten >=-1pt,red, very thick] (-1,-2.32) to (-1,-2.3);
	\draw[redge] (a) to (-1,-2);
	
	\draw [-to,shorten >=-1pt,red, very thick] (-2.18,0.18) to (-2.2,.2);
	\draw[redge] (1) to (-2.5,0.5);

	\draw [-to,shorten >=-1pt,red, very thick] (-0.18,2.18) to (-0.2,2.2);
	\draw[redge] (3) to (-0.5,2.5);
	
	\draw[colB,very thick,->] (5,0) -- (5+13/5,0);
	
	\end{scope}
	
	\begin{scope}[xshift=9.5cm,yshift=-3cm, scale=2.5/4]
	
	\node[vertex] (b) at (3, 2) {};
	\node[vertex] (a) at (-2, -3) {};  
	\node[cvertex] (1) at (-1,-1) {}; 
	\node[cvertex] (3) at (1,1) {};

	\draw[edge] (1) to (3);
	\draw[edge] (3) to (b);
	
	\draw[gray!60,densely dashed] (0,0) circle (4.0);
	
	\draw [-to,shorten >=-1pt,red, very thick] (-2.18,0.18) to (-2.2,.2);
	\draw[redge] (1) to (-2.5,0.5);
	
	\draw [-to,shorten >=-1pt,red, very thick] (0.18,-2.18) to (.2,-2.2);
	\draw[redge] (1) to (.5,-2.5);

	\draw [-to,shorten >=-1pt,red, very thick] (-0.18,2.18) to (-0.2,2.2);
	\draw[redge] (3) to (-0.5,2.5);
	
	\draw[colB,very thick,->] (5,0) -- (5+13/5,0);
	\end{scope}

	\begin{scope}[xshift=17.5cm,yshift=3cm, scale=2.5/4]
	
	\node[vertex] (b) at (3, 2) {};
	\node[vertex] (a) at (-1, -3.5) {};  
	\node[cvertex] (2) at (0,0.0) {};   

	\draw[edge] (2) to (b);
	\draw[gray!60,densely dashed] (0,0) circle (4.0);
	
	\draw [-to,shorten >=-1pt,red, very thick] (-1,-1.92) to (-1,-1.9);
	\draw[redge] (a) to (-1,-1.5);
	
	\draw [-to,shorten >=-1pt,red, very thick] (-1.18,1.18) to (-1.2,1.2);
	\draw[redge] (2) to (-1.5,1.5);
	
	\draw[colB,very thick,->] (5,0) -- (5+13/5,0);
	\end{scope}
	
	\begin{scope}[xshift=17.5cm,yshift=-3cm, scale=2.5/4]
	
	\node[vertex] (b) at (3, 2) {};
	\node[vertex] (a) at (-2, -3) {};  
	\node[cvertex] (2) at (0,0.0) {};

	\draw[edge] (2) to (b);
	
	\draw[gray!60,densely dashed] (0,0) circle (4.0);
	
	\draw [-to,shorten >=-1pt,red, very thick] (-1.18,1.18) to (-1.2,1.2);
	\draw[redge] (2) to (-1.5,1.5);
	
	\draw [-to,shorten >=-1pt,red, very thick] (1.18,-1.18) to (1.2,-1.2);
	\draw[redge] (2) to (1.5,-1.5);

	\draw[colB,very thick,->] (5,0) -- (5+13/5,0);
	\end{scope}
	
	\begin{scope}[xshift=25.5cm,yshift=3cm, scale=2.5/4]
	
	\node[vertex] (b) at (3.5, 1) {};
	\node[vertex] (a) at (-1, -3.5) {};  
	\draw[gray!60,densely dashed] (0,0) circle (4.0);

	\draw [-to,shorten >=-1pt,red, very thick] (-1,-1.92) to (-1,-1.9);
	\draw[redge] (a) to (-1,-1.5);
	
	\end{scope}
	
	\begin{scope}[xshift=25.5cm,yshift=-3cm, scale=2.5/4]
	
	\node[vertex] (b) at (3.5, 1) {};
	\node[vertex] (a) at (-1, -3.5) {};  
	\draw[gray!60,densely dashed] (0,0) circle (4.0);

	\draw [-to,shorten >=-1pt,red, very thick] (1.92,1) to (1.9,1);
	\draw[redge] (b) to (1.5,1);
	
	\end{scope}
	
	\end{tikzpicture}
}
\newcommand{\Elpathd}{
	\begin{tikzpicture}[scale=0.27]
	
	\node[vertex] (b) at (3, 2) {};
	\node[vertex] (a) at (-2, -3) {};  
	\node[cvertex] (1) at (-1,-1) {};      
	
	\node[cvertex] (3) at (1,1) {};

	\draw[edge] (1) to (3);
	\draw[edge] (1) to (a);
	\draw[edge] (3) to (b);
	\draw[gray!60,densely dashed] (0,0) circle (4.0);

	\draw [-to,shorten >=-1pt,red, very thick] (-1.78,-0.22) to (-1.8,-.2);
	\draw[redge] (1) to (-2,0);

	\draw [-to,shorten >=-1pt,red, very thick] (-.22,-1.78) to (-.2,-1.8);
	\draw[redge] (1) to (0.,-2.);

	\draw [-to,shorten >=-1pt,red, very thick] (0.22,1.78) to (0.2,1.8);
	\draw[redge] (3) to (0,2);
	
	\draw [-to,shorten >=-1pt,red, very thick] (1.78,0.22) to (1.8,0.2);
	\draw[redge] (3) to (2,0);
	
	\draw[colB,very thick,->] (4.5,0.5) -- (6.5,1.5);
	\draw[colB,very thick,->] (4.5,-0.5) -- (6.5,-1.5);	
	
	
	\begin{scope}[xshift=9.5cm,yshift=3cm, scale=2.5/4]
	
	\node[vertex] (b) at (3.5, 1) {};
	\node[vertex] (a) at (-1, -3.5) {};  
	\draw[gray!60,densely dashed] (0,0) circle (4.0);

	\draw [-to,shorten >=-1pt,red, very thick] (-1,-1.92) to (-1,-1.9);
	\draw[redge] (a) to (-1,-1.5);
	
	\end{scope}
	
	\begin{scope}[xshift=9.5cm,yshift=-3cm, scale=2.5/4]
	
	\node[vertex] (b) at (3.5, 1) {};
	\node[vertex] (a) at (-1, -3.5) {};  
	\draw[gray!60,densely dashed] (0,0) circle (4.0);

	\draw [-to,shorten >=-1pt,red, very thick] (1.92,1) to (1.9,1);
	\draw[redge] (b) to (1.5,1);

	\end{scope}

	\end{tikzpicture}
}
\newcommand{\Gbc}{
	\begin{tikzpicture}[scale=0.43]
	
	\node[vertex] (1) at (0, 0) {};
	\node[cvertex] (2) at (0, 1.5) {};
	\node[vertex] (3) at (0, 3) {};
	\node[cvertex] (4) at (0, 4.5) {};
	\node[vertex] (5) at (1.5, 6) {};
	\node[vertex] (6) at (-1.5, 6) {};
	
	\node at (-0.8, 0.1) {$B_{1}$};
	\node at (-0.8, 1.5) {$v_{c}$};
	\node at (-0.8, 3) {$l_{u,c}$};
	\node at (-0.8, 4.5) {$u_{c}$};
	\node at (1.6,6.7) {$B_{2}$};
	\node at (-1.6, 6.7) {$B_{3}$};

	\draw[edge] (1) to (2);
	\draw[edge] (2) to (3);
	\draw[edge] (3) to (4);
	\draw[edge] (4) to (6);
	\draw[edge] (5) to (4);

	\end{tikzpicture}
}
\newcommand{\bcG}{
	\begin{tikzpicture}[scale=0.47]
	
	\node[vertex] (1) at (-1, 0) {};
	\node[cvertex] (2) at (1, 0) {};
	\node[vertex] (3) at (1, 2) {};
	
	\node at (0.3, 0.45) {$B_{1}$};
	\node at (1, -0.63) {$v_{c}$};
	
	\draw [-to,shorten >=-1pt,red, very thick] (-1,-0.73) to (-1,-.75);
	\draw[redge] (1) to (-1,-1);
	\draw [-to,shorten >=-1pt,red, very thick] (1,2.73) to (1,2.75);
	\draw[redge] (3) to (1,3);
	
	\draw[edge] (1) to (2);
	\draw[edge] (1) to (3);
	\draw[edge] (2) to (3);

	\node[cvertex] (4) at (4, 0.3) {};
	\node[vertex] (5) at (6.5, 0.3) {};
	\node[vertex] (6) at (6.4, 2.2) {};
	\node[vertex] (7) at (3.9, 2.4) {};
	
	\node at (2.5, 0.75) {$l_{u,c}$};
	\node at (3.5, -0.25) {$u_c$};
	\node at (5, 1.3) {$B_2$};
	
	\draw[edge] (2) to (4);
	\draw[edge] (4) to (5);
	\draw[edge] (5) to (6);
	\draw[edge] (7) to (6);
	\draw[edge] (7) to (4);

	\draw [-to,shorten >=-1pt,red, very thick] (3.9,3.13) to (3.9,3.15);
	\draw[redge] (7) to (3.9,3.4);
	
	\draw [-to,shorten >=-1pt,red, very thick] (6.4,2.95) to (6.4,2.97);
	\draw[redge] (6) to (6.4,3.2);
	
	\draw [-to,shorten >=-1pt,red, very thick] (6.5,-0.45) to (6.5,-0.457);
	\draw[redge] (5) to (6.5,-0.7);
	
	\node[vertex] (8) at (6.2, -2) {};
	\node[vertex] (9) at (4.1, -2) {};
	
	\node at (4.7, -1.2) {$B_3$};
	
	\draw [-to,shorten >=-1pt,red, very thick] (6.2,-2.75) to (6.2,-2.77);
	\draw[redge] (8) to (6.2,-3);
	\draw [-to,shorten >=-1pt,red, very thick] (4.1,-2.75) to (4.1,-2.77);
	\draw[redge] (9) to (4.1,-3);
	
	\draw[edge] (8) to (9);
	\draw[edge] (4) to (8);
	\draw[edge] (9) to (4);

	\end{tikzpicture}
}
\title{New upper bounds for the number of embeddings of minimally rigid graphs}
\author[1,2]{Evangelos Bartzos}
\author[1,2]{Ioannis Z. Emiris}
\author[3]{Raimundas Vidunas}
\affil[1]{Department of Informatics and Telecommunications, National Kapodistrian University of Athens}
\affil[2]{ATHENA Research Center}
\affil[3]{Institute of Applied Mathematics, Faculty of Mathematics and Informatics, Vilnius University}
\date{}
\begin{document}
	\maketitle
\begin{abstract}
  By definition, a rigid graph in $\mathbb{R}^d$ (or on a sphere) has a finite number of embeddings up to rigid motions for a given set of edge length constraints.
  These embeddings are related to the real solutions of an algebraic system.
  Naturally, the complex solutions of such systems extend the notion of rigidity to $\CC^d$.
  A major open problem has been to obtain tight upper bounds on the number of embeddings in $\CC^d$, for a given number $|V|$ of vertices, which obviously also bound their number in $\RR^d$.
  Moreover, in most known cases, the maximal numbers of embeddings in $\CC^d$ and $\RR^d$ coincide.
  For decades, only the trivial bound of $O(2^{d\cdot |V|})$ was known on the number of embeddings.
  Recently, matrix permanent bounds have led to a small improvement for $d\geq 5$.
  
  This work improves upon the existing upper bounds for the number of embeddings in $\RR^d$ and $S^d$, by exploiting outdegree-constrained orientations on a graphical construction, where the proof iteratively eliminates vertices or vertex paths.
  For the most important cases of $d=2$ and $d=3$, the new bounds are $O(3.7764^{|V|})$ and $O(6.8399^{|V|})$, respectively.
  In general, the recent asymptotic bound mentioned above is improved by a factor of $1/ \sqrt{2}$. 
  Besides being the first substantial improvement upon a long-standing upper bound, our method is essentially the first general approach relying on combinatorial arguments rather than algebraic root counts.  
  
\end{abstract}

\section{Introduction}\label{sec:intro}

Rigid graph theory is the branch of mathematics examining the properties of graphs that admit rigid embeddings.
Even if the origins of rigidity theory are rather old \cite{Maxwell}, it receives nowadays significant attention motivated by applications in robotics \cite{Rob1,Drone}, molecular biology \cite{Bio2,Bio1}, sensor network localization \cite{sensor}, and architecture \cite{arch2,arch1}.
Moreover, rigidity theory, and the closely related field of distance geometry, are also relevant as independent mathematical areas at the intersection of graph theory, computer algebra, and computational geometry.

Let $G=(V,E)$ be a graph with vertex and edge sets denoted by $V$ and $E$, respectively.
Let $\mathbf{p} = \{ p_1, \dots, p_{|V|} \} \in \RR^{d |V|}$ be an embedding of the graph in a Euclidean space $\RR^d$, in other words the assignment of the vertices to points, once a coordinate frame has been fixed.
Every embedding induces a set of edge lengths $\{ \lVert p_u-p_v \rVert \}_{(u,v) \in E}$,
where $\lVert \cdot \rVert$ denotes Euclidean norm.
This embedding is called \emph{rigid} if the total number of all graph embeddings that satisfy the same edge length constraints is finite modulo rigid transformations; otherwise the embedding is called \emph{flexible}.

Embeddings correspond to real solutions of an algebraic system that captures the edge length constraints. 
Therefore, the complex roots of these systems extend the notion of rigidity to the complex space, thus we may refer to \emph{complex embeddings}.
The number of embeddings in $\RR^d$ is thus bounded by the number of real or complex solutions of the algebraic system, or by bounds on these numbers. The most relevant bounds on the number of complex solutions of well-constrained algebraic systems are B\'ezout's bound, the multivariate B\'ezout bound (m-B\'ezout)~\cite{Shafarevich2013}, and the Bernstein (also known as BKK) bound~\cite{Bernshtein1975,Khovanskii1978,Kouchnirenko1976} expressed by the system's mixed volume.

One of the most important theorems in rigidity theory states that rigidity is a \textit{generic property} \cite{AsimowRoth}.
This means that a graph is either rigid or flexible for an open dense subset of embeddings $\mathbf{p}\in \RR^{d |V|}$; any such embedding shall be called a \textit{generic embedding} \cite{handbook1}.
If any edge deletion in a generically rigid graph $G$ yields a flexible graph, then $G$ is a \textit{generically minimally rigid graph}.
Let us note that every graph that is generically minimally rigid in $\CC^d$ is also generically minimally rigid on the sphere $S^d$ \cite{Whiteley_cone}.
Generically minimally  rigid graphs in $\CC^2$  are better known as \emph{Laman graphs} \cite{tay,handbook1} while, following \cite{GKT17,belt,bes} we will call minimally  rigid graphs in $\CC^3$ \emph{Geiringer graphs}.

Another well-known theorem due to Maxwell~\cite{Maxwell} gives necessary conditions for minimal rigidity that are related to a simple edge count in the graph and its subgraphs.
It states that, if $G=(V,E)$ is a minimally generically rigid graph in $\CC^d$, then $|E|=d\dot |V|- \binom{d+1}{2}$ and, for each subgraph $G'=(V',E')$ of $G$ with $|V'|\geq d$, it holds that $|E'|\leq d\dot |V'|-\binom{d+1}{2}$.
The total edge count corresponds intuitively to the number of vertex coordinates reduced by the number of degrees of freedom of rigid motions.
This condition is also sufficient when $d=2$ for $G$ to be Laman \cite{Geiringer1927,Geiringer1932,Laman}.
Minimally rigid graphs in any dimension are related to well-constrained algebraic systems: the number of complex embeddings is invariant of the choice of (generic) edge lengths \cite{Jackson}.

A major open question in rigidity theory is to establish tight upper bounds on the number of embeddings of generically minimally rigid graphs and especially Laman and Geiringer graphs, see e.g., \cite{Borcea,Jackson,bes}.
This has been so far addressed by bounding the root count in the corresponding algebraic system. A straightforward bound of $O(2^{d|V|})$ thus follows from B\'ezout's bound  derived on a system of $O(d|V|)$ quadratic length equations (in the rest of the paper whenever we refer to the \emph{B\'ezout bound} will be the above-mentioned bound on the number of embeddings).

A more sophisticated method relies on determinantal equations (and inequalities) from the minors of the Cayley-Menger matrix \cite{Blu}, a well-known extended squared-distance matrix.
By applying a theorem on the degree of determinantal varieties~\cite{HarrisTu}, Borcea and Streinu presented an upper bound which has been the best available for a long time \cite{Borcea}, namely
$$
2 \cdot \prod \limits_{m=0}^{|V|-d-2}  {\displaystyle\binom{|V|-1+m}{|V|-d-1-m}} \, / \, {\displaystyle\binom{2m+1}{m}} .
$$
However, this does not improve asymptotically upon the 
trivial B\'ezout bound.
Lastly, certain mixed volume techniques introduced for $d=2$ in~\cite{Steffens} did not manage to improve upon the latter bound.

Recently, a different approach that combines m-B\'ezout bounds with bounds on the value of matrix permanents \cite{Bre73,Minc63} has improved, for the first time, upon the trivial bounds~\cite{bes} (see Section~\ref{sec:mbez} for details), albeit only for $d\geq 5$; in fact, the improvement is significant for substantially larger dimension.
Table~\ref{tab:bounds} juxtaposes the classic B\'ezout bound to this improvement and the results of the present paper; actual bounds are powers of the given basis to the power $|V|-d$.
Unlike all aforementioned results, the approach introduced in this paper exploits certain novel combinatorial constructions rather than algebraic root counts exploiting the structure of the underlying equations.

\begin{table}[htp!]
	\caption{Power basis of upper bounds for minimally rigid graphs in $\CC^d$: 
		the first line contains the bounds derived in this paper,
		B-M those from Br\`egman-Minc \cite{bes}, and B\'ez.\ corresponds to the trivial B\'ezout bound.
		\label{tab:bounds}}
	\begin{tabular}{l|llllllll}\hline
		\noalign{\smallskip}
		$d$ & 2 & 3 & 4 &5  & 6 & 7 & 8 & 9 \\
		\hline\\ 
		\emph{this} & 3.7764 &   6.8399  & 12.686  &  23.899 & 45.533 & 87.469 & 168.90 & 327.45 \\
		\emph{B-M}  & 4.8990 & 8.9442 & 16.733 & 31.749 & 60.795 & 117.17 & 226.89 & 441\\
		\emph{B\'ez.}  &  4 &   8  &  16 & 32 & 64  &  128 & 256 & 512 \\
		\noalign{\smallskip}\hline
	\end{tabular}
\end{table}

Furthermore, in~\cite{bes}, directed graphs were considered.
In particular, graph orientation bounds derived from~\cite{Felsner} were employed to show that there are at most $O\left(7.1131^{|V|}\right)$ embeddings for the subclass of planar (in the graph-theoretical sense) Geiringer graphs. 
This has been a minor improvement in the long quest for better upper bounds, but it was the first to rely on combinatorial rather than algebraic arguments, thus paving the way to the approach of this paper, as presented in Section~\ref{sec:mbez}.

Asymptotic lower bounds are computed by gluing smaller graphs with known number of embeddings \cite{Borcea,GKT17,etv}, in particular graphs with maximal number of embeddings among all minimally rigid graphs in a given embedding space with same vertex number.
For Laman and Geiringer graphs, there are huge gaps between upper and lower bounds:
for Laman graphs there are $\Omega(2.5080^{|V|})$ 
embeddings in $\CC^2$ \cite{Joseph_lam,GKT17}, $\Omega(2.5698^{|V|})$ complex embeddings on the sphere \cite{count_sphere}, whereas for Geiringer graphs the bound becomes $\Omega(3.0683^{|V|})$ in $\CC^3$ \cite{GKT17}.

\paragraph{Our contribution.} 
We extend the method based on outdegree-constrained orientations, introduced in~\cite{bes}, in order to establish new upper bounds on the embedding numbers of minimally rigid graphs.
These bounds are essentially the first to improve upon the trivial B\'ezout bounds for every dimension $d \geq 2$.

The new bounds improve upon all existing bounds by relying on certain novel combinatorial constructions; in contrast, we do not exploit any further structure in the underlying algebraic system.
First, we introduce a graph structure that inherits some of the properties of minimally rigid graphs, which we call \textit{pseudographs}.
Then, we apply an iterative method that eliminates a vertex or a path in each step, while maintaining some basic properties of the pseudograph.
This is used to derive bounds on fixed-outdegree orientations of pseudographs, which eventually improve upon the current upper bound on the embedding numbers. 

In particular, we improve the bound for Laman graphs to $O(3.7764^{|V|})$ and that for Geiringer graphs to $O(6.8399^{|V|})$.
The first improvement solves \textit{Open Problem 1} in \cite{GKT17}.
For higher dimensions, the improvement over the asymptotic formula presented in~\cite{bes} is by a factor of $1/ \sqrt 2$.
Let us note that the bounds in this paper hold both for embeddings in $\CC^d$ as well as in the $d-$dimensional sphere $S^d$.
Thus, whenever we refer to upper bounds on embedding numbers, both spaces are implied.

\paragraph{Organization.} 
The rest of the paper is organized as follows. 
In Section~\ref{sec:mbez} we present a recent result \cite{bes} that relates graph orientations to the m-B\'ezout bound applied to bounding the embeddings of minimally rigid graphs.
In Section~\ref{sec:lam}, we introduce a graphical structure and subsequently we compute a bound on the number of its orientations with fixed outdegree $2$.
This leads to a new upper bound on the embeddings of Laman graphs.
In Section~\ref{sec:higher}, we generalize this method to the case of orientations with fixed outdegree $d$ and derive improved upper bounds for higher dimensions.
Finally, in Section~\ref{sec:conc} we conclude and present some open questions.

\section{Edge orientations and bounds on embeddings}\label{sec:mbez}

This section discusses two results from~\cite{bes}, used in the present paper.
Both rely on well-constrained algebraic systems whose solutions correspond to embeddings.
In the sequel (unless specified otherwise), when giving bounds on the embedding number, we refer to bounds on the number of algebraic roots.

The first result, which shall be extended in this paper, relates embedding numbers to the number of graph orientations that satisfy certain constrains on the outdegree of vertices. 
Graph orientations have already been applied in rigidity theory, e.g.~\cite{pebble2,Whiteley,Tibor}.
Our starting point is the following theorem\footnote{In~\cite{bes}, indegree constraints were used but here, following~\cite{pebble2}, we use (equivalently) outdegrees.}:

\begin{theorem}[{}\cite{bes}]\label{thm:bez}
	Let $G(V,E)$ be a minimally rigid graph in $\CC^d$ that contains at least one complete subgraph with $d$ vertices.
	Let $K_d=(v_1,\dots v_d)$ be one such subgraph: we call it {\em fixed}, and we also call its vertices and edges {\em fixed}.
	By removing the edges of $K_d$ from $G$, graph $G'=(V,E \backslash\, edges(K_d))$ is defined.
	Let $B(G,K_d)$ denote the number of outdegree-constrained orientations of $G'$, such that 
	\begin{itemize}
		\item the outdegree of $v_1, \dots , v_d$  is $0$.
		\item the outdegree of every vertex in $V\backslash \{v_1,\dots , v_d\}$ is $d$.
	\end{itemize}
	Then, the number of embeddings of $G$ in $\CC^d$ does not exceed	
	$$ 
	2^{|V|-d}\cdot B(G,K_d) .
	$$
\end{theorem}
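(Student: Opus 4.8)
The plan is to bound the number of embeddings by the number of complex solutions of the polynomial system encoding the edge-length constraints, and then to bound that solution count by a multihomogeneous Bézout (m-Bézout) number whose combinatorial evaluation equals exactly $2^{|V|-d}\cdot B(G,K_d)$. First I would pin the fixed clique $K_d$ to one realisation of a $d$-simplex consistent with its $\binom{d}{2}$ edge lengths; since $K_d$ is itself rigid, every embedding of $G$ is equivalent under a rigid motion to one placing $K_d$ at this simplex, so the number of embeddings modulo rigid motions is at most the number of complex solutions of the pinned system. The remaining unknowns are the coordinates $p_v\in\CC^d$ of the $|V|-d$ non-fixed vertices, with one equation $\lVert p_u-p_v\rVert^2=\lambda_{uv}$ per edge of $G'=(V,E\setminus\mathrm{edges}(K_d))$. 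By Maxwell's count $|E|=d|V|-\binom{d+1}{2}$, so $G'$ has $d|V|-\binom{d+1}{2}-\binom{d}{2}=d(|V|-d)$ edges, matching the $d(|V|-d)$ coordinate unknowns: the system is square.

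The crucial step is to linearise the quadratic cross-terms so that each edge equation becomes \emph{bilinear}. I would introduce for every non-fixed vertex $v$ an auxiliary unknown $s_v$ with the constraint $s_v=\sum_{k=1}^{d}p_{v,k}^2$; then each edge equation rewrites as $s_u+s_v-2\sum_k p_{u,k}p_{v,k}=\lambda_{uv}$, which has total degree $1$ in the block $X_v=\{p_{v,1},\dots,p_{v,d},s_v\}$ and degree $1$ in $X_u$, i.e.\ it is bilinear in these two blocks (for an edge to a fixed vertex the fixed coordinates are constants, so only one block appears, again with degree $1$). The sphere constraint $s_v-\sum_k p_{v,k}^2=0$ has degree $2$ in $X_v$ and degree $0$ elsewhere. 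Grouping the $(d+1)(|V|-d)$ unknowns into the $|V|-d$ blocks $X_v$ of size $d+1$, the system has $(|V|-d)$ sphere constraints and $d(|V|-d)$ edge equations, hence $(d+1)(|V|-d)$ equations in total, and is again square.

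Applying m-Bézout, the number of isolated complex solutions is at most the coefficient of $\prod_v t_v^{\,d+1}$ in the product of the multidegree linear forms. The sphere constraints contribute $\prod_v(2t_v)=2^{|V|-d}\prod_v t_v$, and each edge contributes $(t_u+t_v)$ (or a single $t_u$ when one endpoint is fixed), so the bound is $2^{|V|-d}$ times the coefficient of $\prod_v t_v^{\,d}$ in
\[
\prod_{(u,v)\in E(G')}(t_u+t_v).
\]
Expanding this product, selecting the monomial $t_w$ from the factor of an edge $e$ amounts to assigning $e$ to endpoint $w$, i.e.\ orienting $e$ with $w$ as its tail; the sought coefficient counts exactly the assignments in which each non-fixed vertex is the tail of precisely $d$ edges, while every edge at a fixed vertex is forced to the non-fixed endpoint so that fixed vertices are tails of none. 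These are precisely the outdegree-constrained orientations enumerated by $B(G,K_d)$, yielding the m-Bézout number $2^{|V|-d}\cdot B(G,K_d)$ and hence the claimed bound.

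The main obstacle is the linearisation: in the naive coordinate system each edge equation has degree $2$ in each endpoint block through the $\lVert p_u\rVert^2$ and $\lVert p_v\rVert^2$ terms, which would inflate the bound to $2^{d(|V|-d)}\cdot B(G,K_d)$; trading these many quadratic terms for a single quadratic constraint per vertex is exactly what produces the sharp factor $2^{|V|-d}$. A secondary point to verify carefully is that pinning $K_d$ genuinely represents every rigid-motion class, so the pinned solution count is a legitimate upper bound. Finally, the identical multidegree structure on $S^d$, where $\langle p_u,p_v\rangle=c_{uv}$ is already bilinear and $\lVert p_v\rVert^2=1$ plays the role of the sphere constraint, explains why the same bound holds in the spherical case.
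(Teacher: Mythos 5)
Your proposal is correct and follows essentially the same route as the proof this paper relies on: the theorem is imported from~\cite{bes} without proof here, and the argument in~\cite{bes} is exactly your construction --- pin the clique $K_d$, introduce the auxiliary variables $s_v=\lVert p_v\rVert^2$ so that each edge equation becomes bilinear in the vertex blocks of size $d+1$, and evaluate the m-B\'ezout bound as $2^{|V|-d}$ times the coefficient of $\prod_v t_v^{\,d}$ in $\prod_{(u,v)\in E(G')}(t_u+t_v)$, which counts precisely the outdegree-constrained orientations $B(G,K_d)$ (equivalently, the permanent of $A_{G,K_d}$ described in Section~\ref{sec:mbez}). The only point to make fully explicit is that minimal rigidity with generic edge lengths guarantees the pinned system has finitely many (isolated) complex solutions, so the m-B\'ezout count indeed bounds the number of embeddings.
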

Minimally rigid graphs have complex embeddings for every generic choice of edge lengths, hence their bound is positive, and so is the number of orientations in Theorem~\ref{thm:bez}.

Figure~\ref{fig:orient} illustrates the theorem for the famous 6-vertex Desargues' graph in the plane (aka double-prism), yielding an upper bound of 32, while the actual embedding number in $\RR^2$ and $\CC^2$ is $24$~\cite{Borcea}. 
In $S^2$ there are 32 embeddings, in the real and complex case~\cite{belt}.

\begin{figure}[htp]
	\begin{center}	
		\includegraphics[scale=0.43]{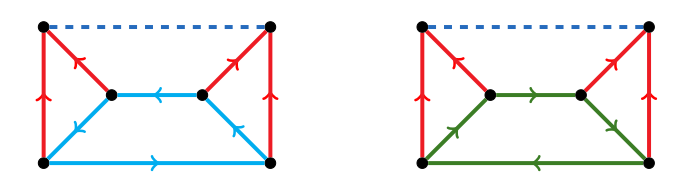}
		\caption{The two possible constrained orientations of Desargues' graph.
			There is a unique orientation of red edges up to the choice of the fixed (dashed blue) edge.
			The number of embeddings is bounded by $2^{6-4} \cdot 2=32$.	\label{fig:orient}}
	\end{center}
\end{figure}

The second relevant result in~\cite{bes} had improved the bound on Euclidean embeddings for $d\geq 5$.
Matrix $A_{G,K_d}$ was constructed by repeating each row of the incidence matrix $d$ times, while excluding the rows that represent the vertices and edges of $K_d$. 
For a rigid graph and a fixed $K_d$, the number of outdegree-constrained orientations (described in Theorem~\ref{thm:bez}) is related to the permanent of $A_{G,K_d}$, denoted by $\per(\cdot)$, as follows:
$$
B(G,K_d)~=~\displaystyle \left(\frac{1}{d!} \right)^{|V|-d} \per(A_{G,K_d}).
$$
By applying the Br\`egman-Minc bound on the permanent of $(0,1)$-matrices \cite{Bre73,Minc63} to $\per(A_{G,K_d})$, one obtains
\begin{equation}\label{eq:bes}
	O\left( \left(2\cdot \sqrt{ \binom{2d}{d}} \right)^{|V|} \right)
\end{equation}
as an upper bound on the number of embeddings, thus improving upon the B\'ezout bound for every $d\geq 5$. 
In this paper, we improve this asymptotic formula by a factor of $1/ \sqrt{2}$, see Equation~(\ref{eq:asym}).

\paragraph{Existence of a clique.}
Let us elaborate on the existence of $K_d$ and on alternatives to compute the bound in its absence.
In Laman graphs, there is always a fixed edge, or $K_2$ (Figure~\ref{fig:orient}).
In most known cases of Geiringer graphs, there is a fixed triangle, but there exists minimally rigid graphs with no triangles: $K_{6,4}$ is the only instance with up to $10$ vertices.

Generally, if for $d\geq 3$ no $K_d$ exists, a maximal clique may be fixed with $d'<d$ vertices and for the rest $d-d'$ vertices one may fix an appropriate number of coordinates, thus factoring out rotations and translations according to Maxwell's condition.
Clearly, $d'\geq 2$ since an edge always exists.

For $d=3$, Maxwell's condition removes $6$ degrees of freedom (dof) and the presence of a triangle removes $3$ additional degrees of freedom, thus fixing the $9$ coordinates of the triangle.
If no triangle exists, $3$ vertices are selected such that $2$ of them are the endpoints of an edge. 
Then, we use Maxwell's condition to remove  $6$ degrees of freedom as follows: first we define a plane on which all three vertices lie by fixing one of their coordinates e.g. $x=0$ for all three, removing $3$ dof, and then we fix the other $2$ coordinates of the first vertex and $1$ more coordinate of the second vertex removing $3$ more dof.
An additional dof is removed using the edge, fixing the third coordinate of the second vertex.
Now the first two vertices are \emph{fixed}, while the third is \emph{partially fixed}.
The corresponding algebraic system counts every embedding twice (by reflection on the plane defined above).

In order to compute the bound, we count orientations such that the two fixed vertices have outdegree $0$, the partially fixed vertex has outdegree $2$ (equal to the degrees of freedom of the vertex), while the others have outdegree $3$.
Let us denote the number of these orientations by $B(G,K_2')$ for a given graph and fixed coordinates.
The bound for the algebraic system is obtained by $2^{|V|-2}\cdot B(G,K_2')$, while for the bound on the embeddings we divide by $2$ to obtain $2^{|V|-3}\cdot B(G,K_2') $.
A similar approach can be applied in higher dimensions.
\footnote{A similar discussion in \cite[Sec.~2.2]{bes} offers details on the algebraic systems.}

\section{Bounding the number of embeddings for Laman graphs} \label{sec:lam}

In this section we develop a method to improve the upper bound on Laman graphs embeddings.
We introduce \emph{pseudographs}, and bound the number of orientations for connected pseudographs with fixed outdegree equal to~$2$.
This applies basic graphical operations that reduce the size of the pseudograph in an iterative process.
Then, we relate the upper bound on orientations to this process.
Finally, we connect this bound to the total bound on embeddings.

\subsection{Pseudographs and orientations with fixed outdegree~$2$.}

We define the following graphical structure generalizing that of a graph.

\begin{defn}\label{def:pseu}
	A {\em{pseudograph}} $L(U,F,H)$ is a collection, where $U$ is a set of vertices, $F$ is a set of edges called {\em{normal edges}}, each incident to two vertices in $U$, and $H$ is a set of edges called {\em{hanging edges}}, each with a single endpoint in $U$ and directed out of the vertex\footnote{Hanging edges are reminiscent of "directed loops" in hypergraphs \cite{pebble2}; "half-edges" also have a single endpoint.}.
	Moreover, the graph $G(U,F)$ is called {\em{normal subgraph}}.
	If the normal subgraph is connected, then $L$ is a {\em{connected pseudograph}}.
\end{defn}

Let the \textit{total degree} $p$ of a vertex $v$ denote the total number of  (normal and hanging) edges incident to $v$.
Let $h$ denote the \textit{hanging degree} of $v$, which is the number of hanging edges incident to $v$, while the number of normal edges incident to $v$ is its \textit{normal degree} and equals $p-h$.
The \textit{extended degree} of $v$ is the pair $(p,h)$.

\begin{figure}[htp]
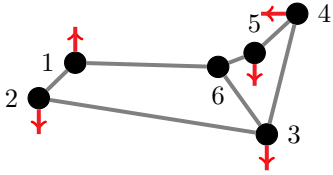

	\centering
	\Pseudo
	\caption{\label{fig:Pseudo} A pseudograph with 6 vertices.
		The extended degrees are the following: $(3,1)$ for vertices $1,2,4,5$, $(3,0)$ for vertex $6$, and $(4,1)$ for vertex $3$.
	}
\end{figure}

We shall consider orientations of a pseudograph $L$ defined by specifying a direction on every normal edge, while by definition hanging edges are directed out of their unique vertex.
Pseudograph orientations refer to the orientations of pseudographs.
We count pseudograph orientations with fixed outdegree $2$ for all vertices: we call these orientations \textit{valid}.
Clearly, if a vertex belongs to a hanging edge, one more edge should be directed out of it, while if it has hanging degree $2$, all its normal edges should be in-directed.
A pseudograph containing a vertex with extended degree $(p,h)$, such that $p<2$ or $h>2$, has no valid orientations.

We now prove the following necessary condition for the existence of a valid orientation of a pseudograph (which resembles Maxwell's count).

\begin{proposition}\label{prop:pseucount}
	Let $L(U,F,H)$ be a pseudograph with a valid orientation.
	Then $|F|+|H|=2|U|$.
\end{proposition}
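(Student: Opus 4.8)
The plan is to prove the identity by double counting the total outdegree over all vertices of $L$ under a fixed valid orientation. First I would invoke the definition of validity directly: every vertex in $U$ has outdegree exactly $2$. Summing the outdegree over all $|U|$ vertices therefore gives the total outdegree as $2|U|$.

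Next I would recompute the same quantity by accounting for the contribution of each directed edge. Every directed edge contributes exactly one unit to the outdegree of its tail vertex and nothing to any other vertex's outdegree. Each normal edge in $F$ is assigned a direction by the orientation, hence has a well-defined tail in $U$ and contributes exactly $1$; each hanging edge in $H$ is, by the convention built into Definition~\ref{def:pseu}, directed out of its unique endpoint, and so likewise contributes exactly $1$. Since $F$ and $H$ are disjoint and together comprise all directed edges of $L$, the total outdegree equals $|F|+|H|$.

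Equating the two counts yields $|F|+|H|=2|U|$, which is the asserted necessary condition. This is essentially a handshake-type argument, so I do not expect any real obstacle; the only subtlety worth flagging is that a hanging edge, despite having a single endpoint, still contributes precisely one unit of outdegree, a fact guaranteed by the definitional requirement that hanging edges point out of their vertex. In particular, no case analysis on the extended degrees $(p,h)$ is needed, and the claim holds verbatim for any valid orientation whose existence is assumed in the hypothesis.
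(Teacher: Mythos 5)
Your proof is correct and is essentially the same argument as the paper's: the paper also double-counts the total outdegree, noting that $|F|+|H|$ is this sum taken over edges while $2|U|$ is the same sum taken over vertices. You simply spell out in more detail (correctly) why each normal and each hanging edge contributes exactly one unit.
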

\begin{proof}
	
	$|F|+|H|$ is the sum of outdegrees over all edges; $2|U|$ equals the sum of outdegrees over vertices. 
\end{proof}

\subsection{Iterative elimination}

Now we present the basic graphical operations used to reduce the size of a connected pseudograph.
We specify an iterative elimination process comprised of a sequence of steps, with the requirement that the pseudograph stays connected.
We shall distinguish two types of steps, depending on the extended degree of the vertex, or of the vertex path to be eliminated.
The process terminates when the current pseudograph's normal subgraph is a tree; see details in Proposition~\ref{prop:tree}.

\begin{figure}[htp!]
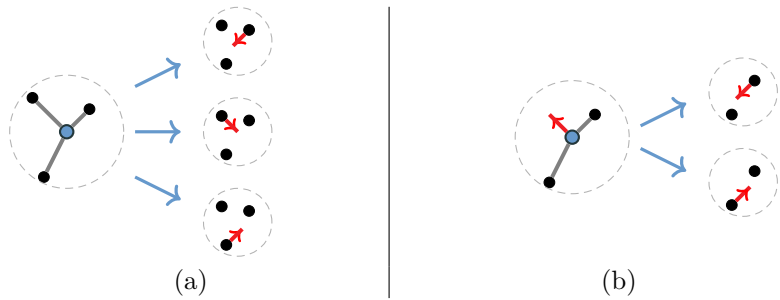
	
	\centering \begin{tabular}{c|c}
		\Eliii \hspace*{12mm}	& \hspace*{12mm} \raisebox{.25\height}{\EliiiH} \\
		(a) & \hspace*{6mm} (b) 
		\vspace{3mm}\\
	\end{tabular} 
	\\
	\caption{\label{fig:el}
		Elimination of a vertex with extended degree (a) $(3,0)$, encountered in vertex elimination, or (b) $(3,1)$, encountered in path elimination.
		In (a) there are $3$ choices for eliminating edges, resulting in 3 different pseudographs; in (b) there are $2$ choices.
	}
\end{figure}

Let us detail the two types of elimination steps.

The first type eliminates a single vertex $v$ with extended degree other than $(3,1)$.
Let $L(U,F,H)$ be a pseudograph:
We choose to eliminate two edges incident to $v$ (Figure~\ref{fig:el}a), thus maintaining the total edge count of Proposition~\ref{prop:pseucount}.
If $v$ is incident to $h\le 2$ hanging edges, these must be eliminated.
Since the outdegree of $v$ equals~2 in a valid orientation of $L$, there are $2-h \le 2$ normal edges incident to $v$ that get eliminated.
All edges that are not eliminated become hanging in the new pseudograph, and correspond to edges directed towards $v$ for a valid orientation of the initial pseudograph.

\begin{figure}[htp]
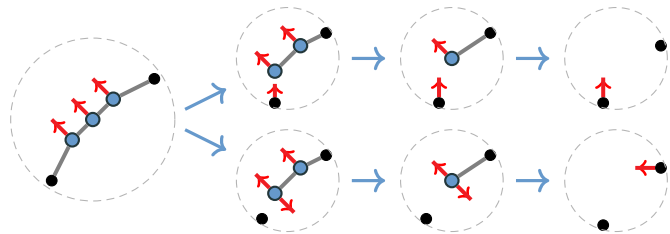

	\centering	\Elpath
	\caption{\label{fig:path} 
		Two choices after eliminating a $(3,1)$-path of length $\ell=3$ respecting the edge count; $\ell-1$ hanging edges get eliminated.
	}
\end{figure}

The second type eliminates a path of $\ell\ge 2$ consecutive vertices, all of extended degree $(3,1)$ (Figure~\ref{fig:path});
we avoid single $(3,1)$ vertex elimination because that would yield a looser bound.
Edges are eliminated similarly as before, namely we eliminate the $\ell$ hanging edges (one per vertex) and another $\ell$ normal edges incident to path vertices, thus eliminating $2 \ell$ edges.
After eliminating the path, there are two choices for the normal edge that remains; in either case, it becomes hanging.

Now, we introduce two parameters for controlling the elimination process, namely the cost and the hanging edge equilibrium.

In every elimination step, there are several ways to choose the edges that remain in the new pseudograph.
The number of choices corresponds to different pseudographs with valid orientations; their number is defined to be the \textit{cost} of the step.

\begin{remark}\label{Rcost}
	The product of the costs of all steps in the elimination process bounds the number of valid orientations of the initial pseudograph.
	In other words, the cost expresses the quotient of the valid orientations of the original graph over the maximum number of valid orientations of the resulting graphs.
\end{remark}

In the proposition that follows, we show that, for vertex elimination, the cost is determined by the extended degree of the eliminated vertex, while for path elimination, the cost always equals~2.

Another important quantity in the elimination is the \textit{hanging edge equilibrium}, defined as the difference between hanging edges in the resulting pseudograph and the original one.

\begin{proposition}\label{prop:eq}
	Let $v$ be a vertex with extended degree $(p,h)$, then the cost and the hanging edge equilibrium of the elimination step are given by
	\begin{align*}
		\binom{p-h}{2-h},  \text{ and  } & \text{    }  p-h-2
	\end{align*}
	respectively.
	In the case of path elimination, for a path of length $\ell$, the cost is $2$ and the hanging edge equilibrium is $1-\ell$.
\end{proposition}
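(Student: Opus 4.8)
The plan is to compute the cost and the hanging edge equilibrium separately for each elimination type, in both cases by carefully counting the edges that are eliminated, the edges that survive (and become hanging), and the number of admissible choices among them. Throughout I rely on the two structural facts established earlier: in a valid orientation every vertex has outdegree exactly~$2$, and hanging edges are always directed out of their vertex. The key observation driving the whole computation is that, when we eliminate a vertex, the $h$ hanging edges incident to it are forced to be removed (they are already directed out, contributing to its outdegree), so that the only freedom lies in choosing which \emph{normal} edges to delete.

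For the vertex elimination step, consider a vertex $v$ of extended degree $(p,h)$ with $(p,h)\neq(3,1)$. First I would record that the normal degree of $v$ equals $p-h$. Since the outdegree of $v$ must be~$2$ and the $h$ hanging edges already account for $h$ units of outdegree, exactly $2-h$ of the normal edges incident to $v$ must be oriented out of $v$; these are precisely the normal edges we eliminate (together with all $h$ hanging edges). The cost is then the number of ways to select these $2-h$ out-edges from among the $p-h$ normal edges, which gives $\binom{p-h}{2-h}$ as claimed. For the hanging edge equilibrium, I would tally the change in $|H|$: we remove the $h$ hanging edges of $v$ and the $2-h$ eliminated normal edges, while each of the remaining $(p-h)-(2-h)=p-2$ normal edges at $v$ survives and is reclassified as hanging (it was directed \emph{into} $v$ in the original orientation). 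Hence the new hanging count minus the old is $(p-2)-h=p-h-2$, matching the stated formula.

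For the path elimination step, let the path consist of $\ell\ge 2$ consecutive vertices each of extended degree $(3,1)$. Here I would use the description from the text: we eliminate the $\ell$ hanging edges (one per path vertex) and $\ell$ further normal edges, for $2\ell$ deletions total, after which a single normal edge survives and becomes hanging, with exactly two choices for which edge that is. The cost is therefore~$2$, independent of $\ell$. For the equilibrium, the new pseudograph gains the one surviving hanging edge while losing the $\ell$ original hanging edges of the path, so the difference is $1-\ell$, as required. I would double-check this against the edge-count invariant of Proposition~\ref{prop:pseucount}: eliminating the $\ell$ path vertices should reduce $|U|$ by $\ell$ and $|F|+|H|$ by $2\ell$, which is consistent with deleting $2\ell$ edges and reclassifying one survivor.

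The routine part is the arithmetic bookkeeping; the part demanding care is making the forced/free dichotomy precise — namely justifying that the $h$ hanging edges are \emph{necessarily} eliminated and that the surviving normal edges must all have pointed into $v$, so that the count $\binom{p-h}{2-h}$ genuinely enumerates distinct resulting pseudographs each carrying a valid orientation (as invoked in Remark~\ref{Rcost}). I would verify the edge cases as sanity checks: a $(2,0)$ vertex gives cost $\binom{2}{2}=1$ and equilibrium $-2$ (a leaf-type reduction), while a $(4,1)$ vertex gives cost $\binom{3}{1}=3$ and equilibrium $1$, each matching the qualitative behaviour illustrated in Figures~\ref{fig:el} and~\ref{fig:path}.
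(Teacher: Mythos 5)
Your treatment of single-vertex elimination is correct and essentially coincides with the paper's own proof: the $h$ hanging edges are forced out and eliminated, $2-h$ of the $p-h$ normal edges are chosen for elimination (cost $\binom{p-h}{2-h}$), and the $(p-h)-(2-h)=p-2$ surviving normal edges become hanging, so the equilibrium is $(p-2)-h=p-h-2$. Your computation of the path equilibrium $1-\ell$ is also fine.

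The gap is in the path cost. You write that after the $2\ell$ deletions ``a single normal edge survives and becomes hanging, with exactly two choices for which edge that is,'' citing the descriptive text --- but that assertion is precisely the cost claim the proposition is meant to establish, so your argument is circular at the one point that needs work. To close it you must explain why none of the $\ell-1$ internal path edges can be the survivor and why both external edges are feasible: in a valid orientation each path vertex directs out its hanging edge plus exactly one normal edge; summing these out-directions over the path shows that all $\ell-1$ internal edges and exactly one of the two edges joining the path to the rest of the pseudograph are directed out of path vertices (hence eliminated), so the survivor is one of the two external edges, and either choice can occur. The paper instead obtains this by viewing path elimination as a sequence of single-vertex eliminations: the first vertex has cost $\binom{2}{1}=2$, after which each successive path vertex has extended degree $(2,1)$ or $(3,2)$, hence cost $\binom{1}{1}=1$ or $\binom{1}{0}=1$, and the product of costs is $2$; either route is acceptable, but some such argument must appear. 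Separately, your $(2,0)$ sanity check contradicts your own formula: a $(2,0)$ vertex has equilibrium $p-h-2=0$ (both normal edges are eliminated, no new hanging edge appears), not $-2$.
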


\begin{proof}
	Recall that at vertex elimination, two edges are eliminated and, when there are hanging edges, these are eliminated first.
	So $2-h$ edges are left to be eliminated among the $p-h$ normal edges of the vertex, which yields the cost of this step.
	Since $2-h$ edges were eliminated, the number of the new hanging edges is $p-h-(2-h)$, while the initial number of hanging edges was $h$.
	Their difference yields $p-h-2$.
	
	Let us view path elimination as a sequence of vertex eliminations.
	Then, eliminating the first vertex has cost~2.
	Each following vertex now has degree $(2,1)$ or $(3,2)$, hence its elimination cost is~$1$.
	Therefore the overall cost is $2$ because it equals the product of all costs.
	As for the hanging edge equilibrium, the path contains $\ell$ hanging edges and, after the elimination step, one remains. 
\end{proof}

If the iterative process continued up to the exhaustion of vertices and, moreover, all cases were as in Figure~\ref{fig:el}(a), there would be $O(3^{|V|})$ orientations which, by Theorem~\ref{thm:bez}, yields a bound of $O(6^{|V|})$ on embeddings. 
However, our process is defined to terminate earlier; see Proposition~\ref{prop:tree}.

\subsection{Bounding the number of valid orientations.}\label{sec:pbound}

In this subsection, by applying the process described above, we bound the number of valid orientations of connected pseudographs.
In the sequel, $n$ denotes the number of vertices of a connected pseudograph and $k$ the total number of its hanging edges.

We first prove that there is always an elimination process that keeps the pseudograph connected.
For this, we recall the definition of a block-cut tree \cite[Chapter~4]{Harary69}.
Recall that a cut-vertex is such that its removal increases the number of connected components in the graph and a biconnected component is a maximal subgraph with no cut vertices \footnote{In \cite[Ch.~3]{Harary69} these subgraphs are called blocks; "biconnected component" is used equivalently, e.g.\ \cite[Ch.~8]{Jungnickel}.}.

\begin{defn}[Harary \cite{Harary69}]\label{def:bcg}
	Let $G(V,E)$ be any graph. Let also $bc(G)$ be the graph such that:
	\begin{itemize}
		\item This graph has a vertex for each biconnected component, and for each cut-vertex of $G$.
		\item There is an edge in $bc(G)$ for each pair of a biconnected component in $G$ and a cut-vertex that belongs to that block.
	\end{itemize}
	If $G$ is connected, then $bc(G)$ is a tree and is called block-cut tree of $G$.
\end{defn}

Following Definitions~\ref{def:pseu} and~\ref{def:bcg}, block-cut trees can be used in the case of normal subgraph $G(U,F)$ of a connected pseudograph $L(U,F,H)$ (Figure~\ref{fig:bcG}).

\begin{figure}[htp]
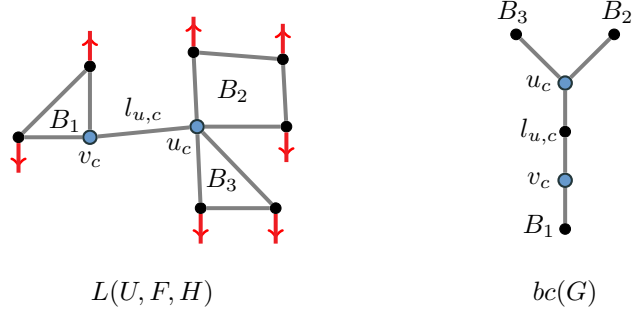

	\centering
	\begin{tabular}{cc}
		\bcG  \hspace{9mm} & \hspace{9mm} \Gbc	\vspace*{3mm}\\
		$L(U,F,H)$ \hspace{9mm} & \hspace{9mm} $bc(G)$
	\end{tabular}
	\caption{\label{fig:bcG} A pseudograph $L(U,F,H)$ and the block-cut tree of its normal subgraph $G(U,F)$.
	}
\end{figure}

We can now prove the following statement, which allows us to use the bound in Expression~(\ref{eq:p}).

\begin{proposition}\label{prop:dec}
	Given a connected pseudograph $L(U,F,H)$, there is always an elimination process where, in each step, we either eliminate a vertex with extended degree other than $(3,1)$, or we eliminate a $(3,1)$-path with length at least~2, so that the resulting pseudograph remains connected.
\end{proposition}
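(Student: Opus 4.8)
The plan is to prove the statement one step at a time: as long as the normal subgraph $G(U,F)$ is not yet a tree I would exhibit a single admissible elimination (a vertex of extended degree $\neq(3,1)$, or a $(3,1)$-path of length $\ge 2$) that keeps the pseudograph connected, and then iterate, the terminal tree case being covered by Proposition~\ref{prop:tree}. The first thing to record is how each elimination acts on the \emph{normal} subgraph. In a vertex elimination every normal edge at $v$ is either deleted or re-hung at its other endpoint, so the new normal subgraph is exactly $G-v$; hence the step preserves connectivity precisely when $v$ is not a cut-vertex of $G$. In a $(3,1)$-path elimination the path vertices are deleted and a single surviving external edge is re-hung, so connectivity is preserved precisely when deleting the path vertices does not disconnect $G$. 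Thus the whole question reduces to \emph{locating removable vertices or paths}.

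To locate them I would pass to the block-cut tree $bc(G)$ (Definition~\ref{def:bcg}), which is a tree. If $G$ is itself $2$-connected I treat it as a single block; otherwise I select a leaf block $B$, which meets the rest of $G$ in a unique cut-vertex $c$, so that every vertex of $B$ other than $c$ is a non-cut-vertex of $G$, hence removable by the observation above. I then split on whether $B$ contains a removable vertex of extended degree $\neq(3,1)$. If it does, I simply eliminate such a vertex; connectivity holds because it is non-cut (or, when $G$ is $2$-connected on at least three vertices, because a $2$-connected graph stays connected after deleting any single vertex). This branch in particular always covers a bridge leaf block, whose pendant vertex has normal degree $1$ and therefore extended degree $\neq(3,1)$.

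The genuinely delicate case is when every removable vertex of $B$ has extended degree exactly $(3,1)$, i.e.\ normal degree $2$. Here I would use that a non-cut-vertex of the leaf block has all of its normal edges inside $B$, so its degree in $B$ equals its normal degree, namely $2$. A simple $2$-connected graph in which every vertex other than $c$ has degree $2$ is forced to be a single cycle: deleting $c$ leaves a disjoint union of paths, each of whose two endpoints is a neighbour of $c$, and if there were two or more such paths then closing them up through $c$ would give two cycles meeting only at $c$, making $c$ a cut-vertex of $B$ and contradicting $2$-connectivity; hence there is a single path and $B$ is a cycle. On this cycle the non-$c$ vertices form a $(3,1)$-path of length $|B|-1\ge 2$ whose two external edges both run to $c$, so eliminating it deletes all of $B$ except $c$, re-hangs one external edge at $c$, and leaves $c$ attached to the remaining blocks: the pseudograph stays connected. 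When $G$ is itself a cycle the same argument applies, choosing any $\ell$ consecutive vertices with $2\le\ell\le|U|-1$ so that a connected path survives.

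I expect the main obstacle to be exactly this last configuration---no removable non-$(3,1)$ vertex, so that vertex elimination is unavailable and one must instead produce a usable $(3,1)$-path. The structural fact that the offending leaf block is then forced to be a cycle is what makes the path available and simultaneously guarantees that its removal cannot disconnect the graph. The only remaining care is with degenerate small blocks (a bridge, or a spurious two-vertex ``cycle''), which are ruled out or absorbed into the easy branch by the simplicity of $G$ together with the normal-degree bookkeeping of Proposition~\ref{prop:eq}.
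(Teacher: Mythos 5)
Your proof is correct and follows essentially the same route as the paper's: reduce connectivity-preservation to non-cut vertices, pass to the block-cut tree, and in a leaf block either eliminate a non-cut vertex of extended degree $\neq(3,1)$ or note that all its non-cut vertices have normal degree~$2$ and eliminate a $(3,1)$-path of length $\ge 2$. Your only addition is making explicit that the offending leaf block must be a cycle, a structural fact the paper leaves implicit when it asserts the existence of two adjacent $(3,1)$-vertices.
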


\begin{proof}
	If there is a non-cut vertex with degree other than $(3,1)$, then it can be eliminated and 	the proposition holds.
	
	We now show that, if all vertices in $L$ with degree other than $(3,1)$ are cut-vertices, then there are at least two adjacent $(3,1)$-vertices that can be eliminated keeping the pseudograph connected (an example is shown in Figure~\ref{fig:bcG}).
	Since $L$ is connected, its normal subgraph $G$ is connected as well, and there exist non-cut vertices in $G$; their extended degree must equal $(3,1)$.
	
	From the definition of block-cut trees, the leaves of $bc(G)$ represent biconnected components in $G$.
	In these components, all vertices but one are non-cut vertices, and their normal degree is~2, since their extended degree is $(3,1)$.
	If such biconnected component had only one non-cut vertex, then this vertex would have normal degree $1$.
	This means that there are at least two such vertices in a biconnected component of $L$ and, since their normal degree equals~2, there exists a path containing $\ell\ge 2$ such vertices, denoted $(v_1,\dots ,v_{\ell})$.
	This path can be eliminated and the resulting pseudograph remains connected; 
	more precisely, we may eliminate successively each $v_i$, thus making vertex $v_{i+1}$ have normal degree~1.
	
	This completes the proof. 
\end{proof}

Concerning the termination condition of our process, we establish the following for a connected pseudograph whose normal subgraph is a tree. 

\begin{proposition}\label{prop:tree}
	Let $L(U,F,H)$ be a connected pseudograph such that $G(U,F)$ is a tree.
	Then
	\begin{enumerate}
		\item The number of valid orientations for $L$ is either $1$ or $0$;
		\item If $L$ has a valid orientation, then $k=n+1$;
	\end{enumerate}
	where $n=|U|$ and $k=|H|$.
\end{proposition}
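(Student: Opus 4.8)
The plan is to handle the two parts separately, since the second is essentially immediate while the first requires a short induction on $n=|U|$. For part~(2) I would combine Proposition~\ref{prop:pseucount} with the fact that a tree on $n$ vertices has exactly $n-1$ edges. If $L$ admits a valid orientation, then $|F|+|H|=2|U|=2n$; since $G(U,F)$ is a tree we have $|F|=n-1$, and therefore $k=|H|=2n-(n-1)=n+1$. This needs no appeal to part~(1).

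For part~(1), I would show that a valid orientation, if one exists, is forced edge by edge and hence unique, arguing by induction on $n$. In the base case $n=1$ the normal subgraph has no edges, so the single vertex reaches outdegree~$2$ exactly through its hanging edges: there is precisely one valid orientation when $k=2$ and none otherwise. For the inductive step I pick a leaf $v$ of the tree $G$, with unique incident normal edge $e=uv$ and hanging degree $h_v$. The outdegree-$2$ condition at $v$ pins down the orientation of $e$: the $h_v$ hanging edges already point out of $v$, so $e$ must contribute $2-h_v$ to the outdegree of $v$. As a single edge contributes only $0$ or $1$, this is possible only when $h_v\in\{1,2\}$, and then the orientation of $e$ is uniquely determined (out of $v$ if $h_v=1$, into $v$ if $h_v=2$); if $h_v\notin\{1,2\}$ no valid orientation exists and the count is~$0$.

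In the forced cases I would reduce to a smaller pseudograph $L'$ and invoke the inductive hypothesis. Delete $v$ together with its hanging edges; if $h_v=1$ also delete $e$, while if $h_v=2$ replace $e$ by a hanging edge attached to $u$ (recording that $e$ is out-directed at $u$). In either case the normal subgraph of $L'$ is $G-v$, a tree on $n-1$ vertices, so $L'$ is again a connected pseudograph of the required type. The step I expect to be the main obstacle is verifying that this reduction is a genuine bijection between the valid orientations of $L$ and of $L'$: one must check that the outdegree of $u$ is preserved (it is, since a normal edge directed into $u$ is simply dropped, whereas one directed out of $u$ is re-encoded as a hanging edge at $u$), and that every valid orientation of $L'$ extends uniquely by restoring the forced orientation of $e$ and the hanging edges at $v$. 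Granting this bijection, the inductive hypothesis yields that $L'$ has $0$ or $1$ valid orientations, and hence so does $L$.

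Finally, I would note that this is consistent with the cost framework already established. A leaf has normal degree~$1$, so its extended degree is $(1+h_v,h_v)\neq(3,1)$, and it is removed by a single-vertex step of cost $\binom{1}{2-h_v}$, which equals~$1$ for $h_v\in\{1,2\}$ and $0$ otherwise, matching Proposition~\ref{prop:eq}. An alternative phrasing therefore peels off leaves one at a time until a single vertex remains and appeals to Remark~\ref{Rcost}: the product of these unit (or zero) costs bounds the number of valid orientations by~$1$, giving the same conclusion.
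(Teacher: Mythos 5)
Your proof is correct and takes essentially the same approach as the paper: the paper's own proof peels off a vertex of normal degree $1$ (a leaf of the tree), notes that then $p-h=1$ so by Proposition~\ref{prop:eq} the elimination cost $\binom{1}{2-h}$ is at most $1$, and concludes uniqueness via the cost framework, while part (2) is obtained exactly as you do from Proposition~\ref{prop:pseucount} and $|F|=n-1$. Your explicit induction with the forced-edge bijection is just a more detailed rendering of this same leaf-peeling idea, as you yourself observe in your closing paragraph.
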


\begin{proof}
	Since $G(U,F)$ is a tree, we can always eliminate a vertex of normal degree $1$ from our pseudograph.
	This means that $p-h=1$, so using the formulas of Proposition~\ref{prop:eq} it is obvious that, if there is a valid orientation, then this is unique.
	If it does have a valid orientation, then from the total edge count in Proposition~\ref{prop:pseucount}, we deduce $|F|+k=2\cdot n$. 
	Since $G$ is a tree, we substitute $|F|=n-1$ in this formula, concluding the proof. 
\end{proof}

Let $P(n,k)$ denote the maximal number of valid orientations for all connected pseudographs with $n$ vertices and $k$ hanging edges. 
Let us recall the Br\`egman-Minc bound and the connection between permanents, constrained orientations, and the bound of Laman graphs as discussed in Section~\ref{sec:mbez} and in~\cite{bes}, where it was established that:
\begin{equation}
	\label{eq:BMp}
	P(n,k)  \le (2!)^{k/2} \, (4!)^{(2n-k)/4} \cdot (2)^{-n} 
	\approx 2.4495^n \cdot 0.6389^k.  
\end{equation}
We therefore seek upper bound estimates of the form
\begin{equation} \label{eq:p}
	P(n,k)\le \alpha^n\,\beta^k
\end{equation}
for real $\alpha,\beta>0$ and $k,n\ge 1$.

Proposition~\ref{prop:tree} implies $P(n,n+1)=1$ for every $n\geq 1$; this is the base case in Theorem~\ref{thm:fixed2}.
Proposition~\ref{prop:dec} precludes that multiple connected components be formed, thus leading to the theorem's inductive proof.
Additionally, Propositions~\ref{prop:pseucount} and~\ref{prop:tree} establish that $k\leq n+1$ for any connected pseudograph with at least one valid orientation. Indeed, $k>n+1$ implies the normal subgraph has $<n-1$ edges so cannot be connected.

We modify the form of the bound in Inequality~(\ref{eq:p}) to $P(n,k)\le\alpha^n\beta^{k-1}$, with $\alpha \beta>1$. 
The modification is justified in the proof below.

\begin{theorem}\label{thm:fixed2}
	The number of valid orientations for a connected pseudograph is bounded above by
	$$ 
	P(n,k) \leq \alpha^n \cdot \beta^{k-1},
	$$
	where $\alpha=24^{1/5}$ and $\beta = 18^{-1/5}$.
\end{theorem}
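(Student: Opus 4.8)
My plan is to establish the bound by strong induction on the number $n$ of vertices of the connected pseudograph $L(U,F,H)$, driving the induction with the elimination process of Propositions~\ref{prop:dec}, \ref{prop:eq} and~\ref{prop:tree} (path elimination removes $\ell$ vertices at once, so strong induction on all smaller vertex counts is the natural form). First I would dispose of the base case, where $G(U,F)$ is a tree: Proposition~\ref{prop:tree} then gives that $L$ has at most one valid orientation and that a valid orientation forces $k=n+1$, so the target $\alpha^{n}\beta^{k-1}$ equals $(\alpha\beta)^{n}$; since $\alpha\beta=(24/18)^{1/5}=(4/3)^{1/5}>1$ this is at least $1$ for every $n\ge 1$, and the base case holds. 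This is exactly where the normalization $\beta^{k-1}$ (rather than $\beta^{k}$) earns its keep: with $\beta^{k}$ the tree bound would read $(\alpha\beta)^{n}\beta$, which drops below $1$ for small $n$ (already at $n=1$), and the base case would fail.

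For the inductive step I would assume $G(U,F)$ is not a tree, so the process has not terminated, and invoke Proposition~\ref{prop:dec} to obtain a connectivity-preserving step: either a single vertex of extended degree $(p,h)\ne(3,1)$ or a $(3,1)$-path of length $\ell\ge 2$. By Remark~\ref{Rcost} the valid orientations of $L$ split according to the choice made in that step, so $P(L)\le(\text{cost})\cdot P(n',k')$ for the resulting connected pseudograph $L'$ with $n'<n$ vertices. Feeding in the induction hypothesis for $L'$ together with the cost and hanging-edge equilibrium from Proposition~\ref{prop:eq}, and cancelling common powers of $\alpha$ and $\beta$ (the equilibrium is precisely what controls the exponent of $\beta$, so the value of $k$ itself drops out and no constraint on its sign is needed), the inequality $P(L)\le\alpha^{n}\beta^{k-1}$ collapses to one numerical condition per step type:
\[
\binom{p-h}{2-h}\,\beta^{\,p-h-2}\ \le\ \alpha
\qquad\text{and}\qquad
2\ \le\ \alpha^{\ell}\beta^{\,\ell-1}.
\]

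It then remains to check these conditions for $\alpha=24^{1/5}$ and $\beta=18^{-1/5}$. Writing $q=p-h$ for the normal degree, in each hanging-degree class the left side $\binom{q}{2-h}\beta^{q-2}$ has a polynomial prefactor damped geometrically by $\beta<1$, so a one-step ratio test confines the maximum to a few small values of $q$. For $h=0$ the maximum sits at $q=4$, where $6\beta^{2}=\alpha$ holds with equality (indeed $6^{5}\beta^{10}=7776/324=24=\alpha^{5}$); for $h=1$ all admissible $q\ne 2$ stay strictly below $\alpha$, while the forbidden value $q=2$, that is $(p,h)=(3,1)$, would give $2>\alpha$; for $h=2$ the function $\beta^{q-2}$ is largest at the smallest admissible $q=1$ (the case $q=0$ is excluded since an isolated normal vertex cannot occur in a connected pseudograph with $n\ge 2$), again below $\alpha$. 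For the path condition, $\ell=2$ is tight since $\alpha^{2}\beta=32^{1/5}=2$, and for $\ell>2$ one has $\alpha^{\ell}\beta^{\ell-1}=2(\alpha\beta)^{\ell-2}\ge 2$.

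The hard part is the joint optimization that selects $(\alpha,\beta)$ and the verification that no admissible step escapes the bound across the infinitely many extended degrees $(p,h)$ and path lengths $\ell$. The two tight constraints, namely $6\beta^{2}=\alpha$ from the $(4,0)$ vertex step and $\alpha^{2}\beta=2$ from the shortest path step, together pin down $\beta=18^{-1/5}$ and $\alpha=24^{1/5}$; and the whole argument is balanced on the $(3,1)$ case, where a single elimination would carry cost $2$ with zero equilibrium, hence factor $2>\alpha$. The bound survives only because Proposition~\ref{prop:dec} lets us replace every such step by a $(3,1)$-path elimination, whose amortized factor per vertex is strictly smaller. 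Confirming the ratio-test monotonicity in each class, so that these finitely many extremal cases genuinely dominate, is the main technical content.
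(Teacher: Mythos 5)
Your proof is correct and follows essentially the same route as the paper's: an induction driven by the elimination process of Proposition~\ref{prop:dec}, trees as the terminal case via Proposition~\ref{prop:tree}, and the same two numerical conditions $\binom{p-h}{2-h}\,\beta^{p-h-2}\le\alpha$ and $\alpha^{\ell}\beta^{\ell-1}\ge 2$, verified by the same ratio-test argument with tightness exactly at $(p,h)=(4,0)$ and $\ell=2$. The only differences are organizational (strong induction on $n$ alone rather than on $(n,k)$, and folding the single-$(2,2)$-vertex base case into the tree case), which do not change the substance.
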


\begin{proof}
	We prove the statement by induction on $n,k$.
	The statement is true for the base cases $n=1$, $k=2$, which a pseudograph consisting of exactly one $(2,2)$ vertex, and also for trees with $k=n+1$, since $P(n,n+1)=1$ (Proposition~\ref{prop:tree}), because $\alpha\beta>1$.
	In these cases the pseudograph has 1 or 0 orientations, representing a termination condition.
	If the exponent of $\beta$ were $k$, the statement would fail for small trees.
	
	From Propositions~\ref{prop:pseucount} and~\ref{prop:tree}, if a connected pseudograph has $k>n$ hanging edges, either it is a tree, or it has no valid orientations.
	So we assume pseudograph $L$, with $n>1$ vertices, has $k \leq n$ hanging edges. 
	Suppose it has a vertex $v$ of extended degree $(p,h)$, such that:
	\begin{enumerate}
		\item $(p,h)\neq (3,1)$, and 
		\item elimination of $v$ and its incident edges keeps the pseudograph connected.
	\end{enumerate}
	Since the number of valid orientations of $L$ is bounded by the cost of the elimination process (Remark~\ref{Rcost}) and the hanging edge equilibrium is $p-h-2$, the number of valid orientations after eliminating this vertex is bounded by
	$$
	\binom{p-h}{2-h}\,P(n-1,k+p-h-2).
	$$
	By the induction assumption, this is bounded by $C(p,h)\,\alpha^n\beta^{k-1},$ where
	$$
	C(p,h)={p-h \choose 2-h} \,\alpha^{-1}\beta^{\,p-h-2}.
	$$
	We now prove that $C(p,h) \le 1$, for $p\ge 2\ge h\ge 0$, and $(p,h)\neq (3,1)$. 
	Direct substitution gives
	$$
	C(p,h)={p-h \choose 2-h} \, \left( 2^{h-p-1}\;3^{2h-2p+3} \right)^{1/5}.
	$$
	Note that:
	\begin{itemize}
		\item $C(2,0)=24^{-1/5}<1$, $C(3,0)=(9/16)^{1/5}<1$, $C(4,0)=1$,
		and the $C(p,0)$ for $p>4$ are decreasing with $p$ as follows:
		\begin{equation}
			\frac{C(p+1,0)}{C(p,0)} = \left( 1+\frac{2}{p-1} \right)
			\beta <1, \quad	\mbox{for } p\ge 4.
		\end{equation}
		\item $C(2,1)=(3/4)^{1/5}<1$, $C(4,1)=(9/16)^{1/5}<1$, 
		and the $C(p,1)$ for $p>4$ are decreasing with $p$ as follows:
		\begin{equation}
			\frac{C(p+1,1)}{C(p,1)} = \left( 1+\frac{1}{p-1} \right) 
			\beta <1, \quad
			\mbox{for } p\ge 4.
		\end{equation}
		\item $C(3,2)=(3/4)^{1/5}<1$, and the $C(p,2)$ for $p>3$ are strictly decreasing with $p$,
		as the binomial factor equals the constant 1.
	\end{itemize}
	$C(2,2)$ is immaterial since $(2,2)$ is a base case corresponding to a pseudograph with a single vertex and $k>n$.
	
	An induction step is proven under the assumptions \refpart{i}--\refpart{ii}. 
	Incidentally, $C(3,1)=(4/3)^{1/5}>1$, which is why we avoid eliminating this type of vertices in a vertex elimination step.
	
	If assumptions \refpart{i}--\refpart{ii} fail, we can eliminate a path of $(3,1)$-vertices keeping the pseudograph connected by Proposition~\ref{prop:dec}.
	
	Let $\ell \ge 2$ denote the number of vertices in the eliminated path.  
	Then, the number of orientations of $L$ is bounded by
	$$
	2P(n-\ell,k-\ell+1),
	$$
	which, by induction, is bounded by
	$$
	2\,\alpha^{n-\ell}\beta^{k-\ell}
	= \left( \frac34 \right)^{\!(\ell-2)/5}\alpha^n\beta^{k-1}
	\, \le \; \alpha^n\beta^{k-1}.
	$$
	The bound is proven. 
\end{proof}

\subsection{A new upper bound on the embedding number of Laman graphs}\label{sec:blam}

This subsection combines the above discussion so as to establish a new upper bound on the  number of embeddings for Laman graphs.

Let $G(V,E)$ be a Laman graph and a fixed edge $e=(v_1,v_2) \in E$.
Let also $L_{G,e}(U,F,H)$ be a collection such that $U=V \backslash\{v_1,v_2\}$, $F=\{e'\in E : v_1, v_2 \notin e'  \}$ and $H$ is the set of all edges incident to one fixed vertex and one non fixed-vertex.
Then $L_{G,e}$ is a pseudographs that may contain one or multiple connected components; 
in Figure~\ref{fig:DisCon}, this construction leads to a pseudograph with two connected components. 
Remark that the number of vertices $n$ of $L_{G,e}$ is related to the number of vertices of $G$ by $n=|V|-2$.

\begin{figure}[htp]
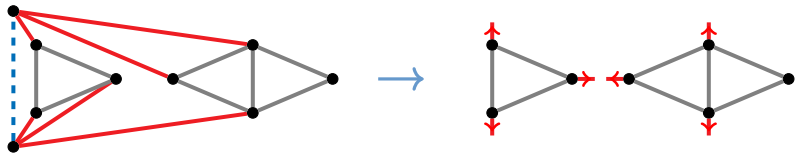

	\centering	\DisCon
	\caption{\label{fig:DisCon} After removing a fixed edge (vertical dashed blue) from a Laman graph, one gets a pseudograph with $2$ connected components.
	}
\end{figure}

A different choice of a fixed edge may result in different pseudographs, for a given Laman graph, while different Laman graphs may result in the same pseudograph, see Figure~\ref{fig:LDiff}.
This happens because any pseudograph representation lacks the information on connections with specific vertices of the fixed edge.

\begin{figure}[htp]
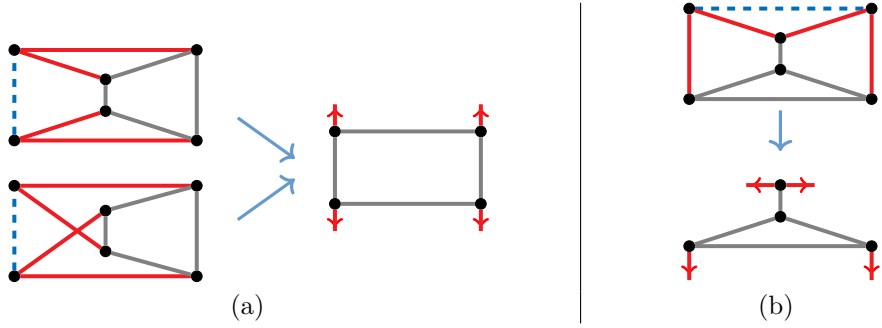

	\centering \begin{tabular}{c|c}
		\LDiff  \hspace{9mm} & \hspace{9mm} \Destr	\\
		(a) \hspace{9mm} & \hspace{9mm} (b)
	\end{tabular}
	\caption{\label{fig:LDiff} (a) Two Laman graphs, Desargues' and $K_{3,3}$, both resulting in the same pseudograph for some fixed edge. (b) Choosing a different fixed edge for Desargues' graph results in a different pseudograph.
	}
\end{figure}

From the construction of $L_{G,e}$ it follows that, when it is connected, its number of valid orientations equals that of its constrained orientations defined in Theorem~\ref{thm:bez}.
This bound is always positive, since it corresponds to well-constrained algebraic systems.
If $L_{G,e}$ has $c>1$ connected components $L_1, \dots, L_c$, then the total number of valid orientations of $L_{G,e}$ equals the product of valid orientations per connected component $L_i$.
This leads to the following corollary, which distinguishes components with one vertex in order to exploit Maxwell's count.

\begin{cor}\label{cor:Lam}
	Let $G(V,E)$ be a Laman graph and $L_{G,e}$ constructed as described above.
	Let $c'$ be the number of connected components of $L_{G,e}$ with more than one vertex, and $n$ the number of its vertices.
	Then, the number of constrained orientations, defined in Theorem~\ref{thm:bez}, is bounded above by 
	$$
	24^{n/5}\cdot 18^{-(k'-c')/5},
	$$
	where $k'$ is the total number of hanging edges in the components of $L_{G,e}$ with more than one vertex.
\end{cor}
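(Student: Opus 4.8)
The plan is to use the fact, noted just before the statement, that the number of constrained orientations of $L_{G,e}$ factors as the product of the numbers of valid orientations of its connected components, and then to bound each factor separately according to whether the component is a single vertex or not. Write $L_1,\dots,L_{c'}$ for the components with more than one vertex, with $n_i$ vertices and $k_i$ hanging edges, so that $\sum_{i=1}^{c'} k_i = k'$, and let the remaining $c-c'$ components be single vertices.

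First I would dispose of the single-vertex components. A component consisting of one vertex $v$ has no normal edges, so all edges at $v$ are hanging and already directed outward; a valid orientation (outdegree $2$) therefore exists if and only if $v$ has extended degree exactly $(2,2)$, in which case the orientation is forced and the component contributes a factor of $1$. Such components do occur: a non-fixed vertex of $G$ adjacent in $G$ only to $v_1$ and $v_2$ becomes an isolated $(2,2)$ vertex. Since $G$ is a Laman graph the overall count is positive, so every component admits a valid orientation; hence each single-vertex component is a $(2,2)$ vertex contributing exactly $1$, which is precisely the base case of Theorem~\ref{thm:fixed2} (equivalently Proposition~\ref{prop:tree}, applied to the trivial one-vertex tree). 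This is the step where Maxwell's count is exploited: rather than feeding these vertices into Theorem~\ref{thm:fixed2}, which would assign them the loose factor $\alpha\beta=(4/3)^{1/5}>1$, we record their exact contribution of $1$.

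Next I would bound the components with more than one vertex. Applying Theorem~\ref{thm:fixed2} to each such $L_i$ gives at most $\alpha^{n_i}\beta^{\,k_i-1}=24^{n_i/5}\,18^{-(k_i-1)/5}$ valid orientations. Multiplying the factors over all components (each singleton contributing a factor of one), the total is bounded by
\[
\prod_{i=1}^{c'} 24^{n_i/5}\,18^{-(k_i-1)/5} \;=\; 24^{\,(\sum_i n_i)/5}\cdot 18^{-(k'-c')/5},
\]
using $\sum_{i=1}^{c'}(k_i-1)=k'-c'$. Finally, since $\sum_{i=1}^{c'} n_i\le n$ and $24^{1/5}>1$, the first factor is at most $24^{\,n/5}$, which yields the claimed bound $24^{n/5}\cdot 18^{-(k'-c')/5}$.

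I expect the only genuinely delicate point to be the bookkeeping around the single-vertex components: one must check that they are forced to be $(2,2)$ vertices (so they really contribute $1$ and not the over-estimate coming out of Theorem~\ref{thm:fixed2}), and then observe that replacing $\sum_i n_i$ by the full vertex count $n$ only weakens the bound, which is harmless. Everything else is the direct multiplicative combination of the per-component estimate from Theorem~\ref{thm:fixed2} with the factorization of orientation counts over connected components.
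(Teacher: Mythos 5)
Your proposal is correct and follows essentially the same route as the paper: factor the orientation count over connected components, apply Theorem~\ref{thm:fixed2} to each component with more than one vertex (using $\sum_i (k_i-1)=k'-c'$), and absorb the singleton components via $\sum_i n_i \le n$ together with $24^{1/5}>1$. Your extra care in verifying that single-vertex components are forced $(2,2)$ vertices contributing exactly $1$ is a sound elaboration of what the paper leaves implicit.
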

\begin{proof}
	Recall that $n=|V|-2$.
	Let $n_1, n_2 \dots n_{c'}$ and $k_1, k_2 \dots k_{c'}$ be respectively the numbers of vertices and of hanging edges per connected component with strictly more than one vertex.
	The bound follows from Theorem~\ref{thm:fixed2}, since $n\geq  \sum\limits_{i=1}^{c'} n_i$ and $k'= \sum\limits_{i=1}^{c'} k_i$.	
\end{proof}

\begin{lemma}\label{lem:kc}
	Let $G(V,E)$ be a Laman graph, and $L_{G,e}$, $k'$ and $c'$ as above. Then $k' \geq 3c'$.
\end{lemma}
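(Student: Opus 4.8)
We need to show $k' \geq 3c'$, where $c'$ counts the connected components of $L_{G,e}$ having more than one vertex, and $k'$ is the total number of hanging edges in those components. Since $k' = \sum_{i=1}^{c'} k_i$ where $k_i$ is the number of hanging edges in component $L_i$, it suffices to prove that each component with more than one vertex has at least $3$ hanging edges, i.e., $k_i \geq 3$ for every such component. Summing over the $c'$ components then gives $k' \geq 3c'$.

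**The plan.** The plan is to reduce the global inequality to a per-component statement and then exploit Maxwell's sparsity condition for Laman graphs. Let me recall the construction: $L_{G,e}$ is obtained from a Laman graph $G(V,E)$ by removing the two fixed vertices $v_1, v_2$ (the endpoints of the fixed edge $e$); normal edges are those of $G$ avoiding both $v_1, v_2$, and hanging edges record the connections from a non-fixed vertex to one of the fixed vertices. So the hanging edges of a component $L_i$ correspond exactly to the edges of $G$ joining a vertex of $L_i$ to $\{v_1, v_2\}$.

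**Key steps.** Fix a component $L_i$ with vertex set $V_i \subseteq U$ and $|V_i| \geq 2$. First I would consider the subgraph $G_i$ of $G$ induced on $V_i \cup \{v_1, v_2\}$. The edges of $G_i$ split into: the normal edges internal to $L_i$ (these are the edges of the normal subgraph $G(U,F)$ lying within $V_i$), the $k_i$ hanging edges (joining $V_i$ to $\{v_1,v_2\}$), and possibly the fixed edge $e=(v_1,v_2)$ itself. Now I would apply Maxwell's inequality to this induced subgraph. The upper-bound form of Maxwell's condition (stated in the introduction) says that for any subgraph $G'=(V',E')$ of a Laman graph with $|V'| \geq 2$, one has $|E'| \leq 2|V'| - 3$. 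Applying this to $G_i$ with $|V_i \cup \{v_1,v_2\}| = |V_i| + 2$ vertices bounds the edge count from above. The complementary ingredient is that each component $L_i$, arising from a connected pseudograph admitting a valid orientation, has a lower bound on its edge count coming from Proposition~\ref{prop:pseucount}: the total edge count satisfies $|F_i| + k_i = 2|V_i|$, where $F_i$ is the set of normal edges of $L_i$. I would combine these: the number of internal normal edges in $G_i$ is $|F_i| = 2|V_i| - k_i$, and the total edge count of $G_i$ is $|F_i| + k_i + (\text{0 or 1 for } e) = 2|V_i| + \epsilon$ with $\epsilon \in \{0,1\}$. To make Maxwell's bound bite, though, I should instead count edges in the subgraph induced on $V_i$ alone, or carefully track where the slack goes.

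**Executing the count.** The cleaner route: apply Maxwell's inequality to the vertex set $V_i$ itself (treating it as a subgraph of $G$ on $|V_i| \geq 2$ vertices). The edges of $G$ internal to $V_i$ are exactly the normal edges $F_i$, so $|F_i| \leq 2|V_i| - 3$. But Proposition~\ref{prop:pseucount} gives $|F_i| = 2|V_i| - k_i$ (assuming the component admits a valid orientation, which it does since it corresponds to a well-constrained system with positive orientation count). Substituting, $2|V_i| - k_i \leq 2|V_i| - 3$, which immediately yields $k_i \geq 3$. Summing over all $c'$ components gives $k' = \sum_i k_i \geq 3c'$, as required.

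**Main obstacle.** The step I expect to require the most care is justifying that Proposition~\ref{prop:pseucount}'s edge-count identity $|F_i| = 2|V_i| - k_i$ applies to each individual connected component $L_i$. Proposition~\ref{prop:pseucount} is stated for a pseudograph with a valid orientation, so I must confirm that each component $L_i$ (with more than one vertex) indeed admits a valid orientation. This follows because $L_{G,e}$ as a whole admits valid orientations (its orientation count equals the constrained-orientation count of Theorem~\ref{thm:bez}, which is positive for minimally rigid graphs), and a valid orientation of the whole restricts to a valid orientation of each component since outdegree constraints are purely local. A secondary subtlety is the correct application of Maxwell: I must apply the inequality $|E'| \leq 2|V'| - 3$ to the vertex set $V_i$ with $|V_i| \geq 2$, noting the Maxwell condition as stated requires $|V'| \geq d = 2$, which holds precisely because we restricted to components with more than one vertex — this is exactly why the corollary singled out $c'$ rather than counting all components.
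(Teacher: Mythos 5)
Your proof is correct and is essentially the paper's own argument: apply Maxwell's count to the normal subgraph of each component with at least two vertices to get $|F_i|\le 2|U_i|-3$, combine with the valid-orientation edge count $|F_i|+k_i=2|U_i|$ of Proposition~\ref{prop:pseucount} to conclude $k_i\ge 3$, and sum over components. The only difference is that you spell out the step the paper leaves implicit, namely that each component inherits a valid orientation from the positivity of the global constrained-orientation count, which is a worthwhile clarification rather than a different route.
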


\begin{proof}
	Let $L_i(U_i,F_i,H_i)$ be a connected component of $L_{G,e}$ with $k_i$ hanging edges.
	The normal subgraph $G_i(U_i,F_i)$ is a subgraph of a Laman graph.
	If $|U_i|\geq 2$, by Maxwell's count we have $|F_i|\leq 2 \cdot |U_i|-3$ therefore $k_i\geq 3$. 
\end{proof}

Now we are ready to prove the new upper bound for Laman graphs.

\begin{theorem}\label{thm:aslam}
	Let $G(V,E)$ be a Laman graph.
	Then the number of its embeddings in $\CC^2$ (and $S^2$) is bounded from above by 
	$$
	18^{-2/5}\cdot \left(4\cdot (3/4)^{1/5}\right)^{|V|-2} = O\left(3.7764^{|V|}\right).
	$$
\end{theorem}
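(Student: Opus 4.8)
The plan is to combine Theorem~\ref{thm:bez} with Corollary~\ref{cor:Lam} and Lemma~\ref{lem:kc}, then optimize over the free parameters. By Theorem~\ref{thm:bez}, the embedding number of a Laman graph $G$ is bounded by $2^{|V|-2}\cdot B(G,e)$, where $B(G,e)$ is the number of constrained orientations of $G'$ after removing the fixed edge $e=K_2$. The construction $L_{G,e}$ identifies these constrained orientations with valid pseudograph orientations, so $B(G,e)$ equals the product of valid orientations across connected components. Thus the whole proof reduces to bounding $2^{|V|-2}$ times the quantity from Corollary~\ref{cor:Lam}.

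First I would invoke Corollary~\ref{cor:Lam} to write
$$
B(G,e)\;\le\;24^{n/5}\cdot 18^{-(k'-c')/5},
$$
with $n=|V|-2$. The key observation is that the exponent $-(k'-c')$ is most favorable (smallest in magnitude, hence the bound is largest) when $k'-c'$ is as small as possible, so to get a uniform bound over \emph{all} Laman graphs I need a \emph{lower} bound on $k'-c'$. This is exactly what Lemma~\ref{lem:kc} supplies: $k'\ge 3c'$, hence $k'-c'\ge 2c'\ge 2$ whenever $c'\ge 1$. (The case $c'=0$, where every component of $L_{G,e}$ is a single vertex, would need a brief separate check, but since $n=|V|-2\ge 0$ and a Laman graph on $\ge 3$ vertices always produces at least one nontrivial component, $c'\ge 1$ holds in the relevant range.) Substituting the worst case $k'-c'=2$ gives
$$
B(G,e)\;\le\;24^{n/5}\cdot 18^{-2/5}.
$$

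Next I would assemble the final bound by multiplying through by $2^{|V|-2}=2^{n}$:
$$
2^{n}\cdot 24^{n/5}\cdot 18^{-2/5}
=18^{-2/5}\cdot\bigl(2\cdot 24^{1/5}\bigr)^{n}
=18^{-2/5}\cdot\bigl(4\cdot(3/4)^{1/5}\bigr)^{|V|-2},
$$
where the last equality is the routine algebraic simplification $2\cdot24^{1/5}=2\cdot(16\cdot 3/2)^{1/5}$; more transparently, $\bigl(2\cdot24^{1/5}\bigr)^5=32\cdot24=768=4^5\cdot(3/4)$, so $2\cdot24^{1/5}=4\cdot(3/4)^{1/5}$. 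Finally I would evaluate $4\cdot(3/4)^{1/5}$ numerically to confirm it equals $3.7764\ldots$, yielding the stated $O(3.7764^{|V|})$.

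The only genuinely delicate point is ensuring the $k'-c'\ge 2$ substitution is legitimately the worst case and that the degenerate configurations are handled: specifically that the monotonicity direction is correct (larger $k'-c'$ only \emph{helps}) and that when several components are present the per-component bounds multiply correctly to give $24^{n/5}\cdot18^{-(k'-c')/5}$, which is precisely the content already packaged into Corollary~\ref{cor:Lam}. Once that corollary and Lemma~\ref{lem:kc} are in hand, the remainder is bookkeeping, so I do not anticipate a substantive obstacle beyond verifying the single-vertex-component edge case does not cost anything.
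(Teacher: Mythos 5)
Your proposal follows the paper's own route step for step: Theorem~\ref{thm:bez} to reduce to counting constrained orientations, Corollary~\ref{cor:Lam} for the bound $24^{n/5}\cdot 18^{-(k'-c')/5}$, Lemma~\ref{lem:kc} to get $k'-c'\ge 2c'$, and the algebraic identity $2\cdot 24^{1/5}=4\cdot(3/4)^{1/5}$ (both fifth powers equal $768$). The main chain, the monotonicity direction, and the final algebra are all correct and identical to the paper's argument.

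The one genuine flaw is your dismissal of the degenerate case $c'=0$. You claim that a Laman graph on at least $3$ vertices always yields at least one component of $L_{G,e}$ with more than one vertex, and this is false. It fails for the triangle $K_3$: removing the fixed edge leaves a single vertex with two hanging edges, so $c'=0$ and $n=1$. It also fails for arbitrarily large graphs: take $K_{2,m}$ plus an edge joining the two hub vertices (this graph is Laman), and choose that hub edge as the fixed edge; then every non-fixed vertex is adjacent only to the two fixed vertices, so $L_{G,e}$ consists of $m$ singleton components and $c'=0$. Since Theorem~\ref{thm:aslam} quantifies over all Laman graphs and the construction uses an arbitrary fixed edge, this case must be bounded rather than excluded. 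The paper does so directly: if every component is a singleton, each such vertex has exactly two hanging edges (Laman graphs have minimum degree $2$, and a simple graph permits at most two edges to the fixed pair), so there is exactly one valid orientation; since $1\le 24^{n/5}\cdot 18^{-2/5}$ holds for $n\ge 2$ (equivalently $24^n\ge 324$), the stated bound survives, and the remaining case $n=1$ (the triangle) is handled by the trivial bound $2^{n}\cdot 1=2$. Incidentally, for $n=1$ the displayed formula evaluates to $(64/27)^{1/5}\approx 1.19<2$, so the closed-form constant really only covers $|V|\ge 4$ and the triangle needs this separate mention --- a subtlety your write-up cannot see because the false $c'\ge 1$ claim hides exactly the configurations where it matters.
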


\begin{proof}
	Applying $k' \geq 3c'$ from Lemma~\ref{lem:kc} in Corollary~\ref{cor:Lam}, the number of valid orientations is bounded by $24^{n/5} \cdot 18^{-2/5}$, for $n\geq2$, since either the number of connected components with more than one orientation is $c'\geq 1$, or there is a single valid orientation.
	By doubling this bound and applying Theorem~\ref{thm:bez}, the upper bound follows.
	For $n=1$, Lemma~\ref{lem:kc} does not apply; there is trivially one orientation and the bound is $2$.
\end{proof}

\section{Geiringer graphs and higher dimensions} \label{sec:higher}

This section extends the method of the previous section to orientations of connected pseudographs with fixed outdegree $d\geq 3$, and subsequently establishes new upper bounds on the embedding number of minimally rigid graphs in $\CC^d$ (and $S^d$), for $d\ge 3$.

Let $P_d(n,k)$ denote the maximal number of orientations with fixed outdegree $d$ for connected pseudographs with $n$ vertices and $k$ hanging edges.
As before, we seek bounds of the form
$$
P_d(n,k)\leq \alpha_d^n \cdot \beta_d^{k-1}
$$
for each $d$.
For a fixed outdegree $d\ge 3$, the elimination steps consist of:
\begin{itemize}
	\item Eliminating single vertices of extended degree $(p,h)$, with $p\geq d\geq h\geq 0$, and $(p,h)\neq (d+1,d-1)$;
	then the number of valid orientations is bounded by $ \displaystyle\binom{p-h}{d-h} \cdot P_d(n-1,k+p-h-d).$
	\item Eliminating paths of length $\ell \ge 2$ with $(d+1,d-1)$-vertices;
	then the number of valid orientations is bounded by  
	$ 2 \cdot  P_d(n-\ell,k-(d-1) \ell+1)$. 
\end{itemize}

If we replace $(3,1)$-paths in Proposition~\ref{prop:dec} by $(d+1,d-1)$-paths, we have an analogous result, since $(d+1,d-1)$-vertices have normal degree~$2$.
This implies that there is always an elimination process preserving connectivity.
Moreover, the necessary count in Proposition~\ref{prop:pseucount} is generalized to $|F|+|H|=d\cdot |U|$ for every pseudograph with at least one orientation with fixed outdegree $d$; such orientations extend the notion of validity beyond $d=2$.

An immediate consequence is that, if a connected pseudograph has a tree as normal subgraph and also has an orientation with fixed outdegree $d$, then it holds that $(d-1) n= k-1$ which is our base case, generalizing Proposition~\ref{prop:tree}.

\begin{figure}[htp]
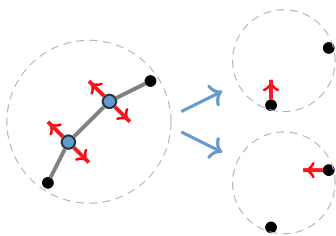

	\centering \Elpathd
	\caption{\label{fig:pathd} Elimination of a $(4,2)$-path (with length $\ell=2$) in the case of orientations with fixed outdegree $3$.
		This elimination is analogous to that in Figure~\ref{fig:path}.
	}
\end{figure}

In the following theorem we establish an upper bound on $P_d$.
If $d=2$, then $\alpha_2, \beta_2$ are evaluated as in Theorem~\ref{thm:fixed2}.
Here, elimination of single vertices of extended degree $(d,d)$ and $(d+1,d-1)$ are excluded from our analysis. The first case because it is one of the base cases, as a pseudograph with exactly one vertex,
and the latter because only path elimination with length $\ell \geq 2$ is considered for these vertices.

\begin{theorem}\label{thm:gen}
	The maximal number of orientations with fixed outdegree $d$ for a connected pseudograph is bounded from above by
	$$P_d(n,k) \leq \alpha_d^n \cdot \beta_d^{k-1}  $$
	for the following choices of $\alpha_d$ and $\beta_d$:
	\begin{equation} \label{eq:ad}
		\alpha_d = \max_{p\ge d} \left(2^{p-d} \, {p\choose d}^{\!2d-3} \right)^{\displaystyle\frac{1}{2p-3}},
	\end{equation}
	and 
	\begin{equation} \label{eq:bd}
		\beta_d=\left(2\,{p\choose d}^{\!-2}\right)^{\displaystyle\frac{1}{2p-3}}
	\end{equation}
	for the value of $p$ that maximizes $\alpha_d$.
\end{theorem}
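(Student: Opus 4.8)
The plan is to mirror the inductive argument of Theorem~\ref{thm:fixed2}, now applied to $P_d(n,k)$, using the generalized elimination steps listed before the statement together with the $(d+1,d-1)$-path version of Proposition~\ref{prop:dec}. I would induct on $n$ (and $k$), taking as base cases the single $(d,d)$-vertex pseudograph ($n=1$, $k=d$) and the pseudographs whose normal subgraph is a tree, where the generalized Proposition~\ref{prop:tree} gives $P_d=1$ together with $k-1=(d-1)n$ (from the generalized count $|F|+|H|=d\,|U|$). In both cases $\alpha_d^n\beta_d^{k-1}=(\alpha_d\beta_d^{d-1})^n$, so the base cases hold as soon as $\alpha_d\beta_d^{d-1}\ge1$. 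For the inductive step, the generalized Proposition~\ref{prop:dec} lets us either eliminate a single vertex of extended degree $(p,h)\neq(d+1,d-1)$ preserving connectivity, or eliminate a $(d+1,d-1)$-path of length $\ell\ge2$. Substituting the two elimination bounds and invoking the induction hypothesis, the step reduces (exactly as in Theorem~\ref{thm:fixed2}) to two scalar inequalities: the \emph{vertex inequality}
\[
C_d(p,h):=\binom{p-h}{d-h}\,\alpha_d^{-1}\,\beta_d^{\,p-h-d}\;\le\;1
\]
for every admissible $(p,h)$ with $p\ge d\ge h\ge0$ and $(p,h)\notin\{(d,d),(d+1,d-1)\}$, and the \emph{path inequality} $(\alpha_d\beta_d^{d-1})^{\ell}\ge 2\beta_d$ for all $\ell\ge2$.

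Before verifying these, I would explain how the values~(\ref{eq:ad})--(\ref{eq:bd}) are forced, which also guides the verification. Imposing equality in the two tightest constraints, namely $C_d(p^*,0)=1$ at the critical index $p^*$ and equality at $\ell=2$ in the path inequality, i.e. $(\alpha_d\beta_d^{d-1})^2=2\beta_d$, yields a $2\times2$ system. Solving it gives exactly
\[
\beta_d=\left(2\,\binom{p^*}{d}^{-2}\right)^{1/(2p^*-3)},\qquad
\alpha_d=\binom{p^*}{d}\beta_d^{\,p^*-d}=\left(2^{p^*-d}\binom{p^*}{d}^{2d-3}\right)^{1/(2p^*-3)},
\]
reproducing~(\ref{eq:bd}) and~(\ref{eq:ad}). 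With these values the path inequality at $\ell=2$ is an equality, and since it rewrites as $\alpha_d\beta_d^{d-1}=\sqrt{2\beta_d}$, the condition $\alpha_d\beta_d^{d-1}\ge1$ is equivalent to $\beta_d\ge\tfrac12$, i.e. to the elementary bound $\binom{p^*}{d}\le 2^{p^*-1}$, valid for all $p^*\ge 2$. This simultaneously settles the base cases and, because $\alpha_d\beta_d^{d-1}\ge1$, promotes the $\ell=2$ equality to the full path inequality for every $\ell\ge2$ (the right-hand side being nondecreasing in $\ell$).

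The crux, which I expect to be the main obstacle, is then the vertex inequality $C_d(p,h)\le1$ uniformly in $(p,h)$. Writing $m=p-h$ for the normal degree, $C_d(p,h)\le1$ is $\binom{m}{d-h}\beta_d^{\,m-d}\le\alpha_d$, so it suffices to show that $(p^*,0)$ globally maximizes $\binom{p-h}{d-h}\beta_d^{\,p-h-d}$ over all admissible pairs. On the slice $h=0$ this is the unimodal sequence $g(p)=\binom{p}{d}\beta_d^{\,p-d}$, whose consecutive ratio $g(p+1)/g(p)=\tfrac{p+1}{p+1-d}\beta_d$ is strictly decreasing in $p$; hence $g$ has a single peak, and one checks that choosing $p^*$ to be the maximizer of $f(p):=\bigl(2^{p-d}\binom{p}{d}^{2d-3}\bigr)^{1/(2p-3)}$ in~(\ref{eq:ad}) places that peak exactly at $p^*$ (equivalently, $\beta_d$ lands in the window $\tfrac{p^*-d}{p^*}\le\beta_d\le\tfrac{p^*+1-d}{p^*+1}$, which is precisely what $p^*=\arg\max_p f(p)$ encodes). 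The genuinely delicate regime is $h\ge1$ with small normal degree $m<2d$, where $\binom{m}{d-h}$ need not be dominated by $\binom{m}{d}$, so one cannot simply reduce to the $h=0$ slice; there I would fix $m$, maximize the cost binomial $\binom{m}{d-h}$ over the admissible $h$ (it is unimodal in $d-h$, peaking near $m/2$), and then combine with the monotone factor $\beta_d^{\,m-d}<1$. The finitely many residual small configurations, in particular those adjacent to the excluded pairs $(d,d)$ and $(d+1,d-1)$, would be dispatched by direct substitution, exactly as the explicit values $C(2,0),C(3,0),\dots$ were handled in the proof of Theorem~\ref{thm:fixed2}.
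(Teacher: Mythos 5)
Your overall skeleton is the same as the paper's: induction with the single-$(d,d)$-vertex and tree base cases, reduction to the vertex constraints~(\ref{eq:genineqa}) and path constraints~(\ref{eq:genineqb}), and solving the two binding constraints ($h=0$ at the critical $p^*$, path at $\ell=2$) to force the values~(\ref{eq:ad})--(\ref{eq:bd}). Your handling of the base cases ($\beta_d\ge\tfrac12$ via $\binom{p^*}{d}\le 2^{p^*-1}$), of the path family for $\ell\ge 2$, and of the $h=0$ slice (unimodality of $\binom{p}{d}\beta_d^{\,p-d}$ plus the fact that $\arg\max f$ coincides with $\arg\min_p\bigl(2\binom{p}{d}^{-2}\bigr)^{1/(2p-3)}$ along the path-equality line) is correct and essentially matches the paper. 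The gap is exactly in the regime you yourself flag as delicate, $h\ge 1$ with normal degree $m=p-h<2d$, and the plan you give for it fails. First, the ``monotone factor $\beta_d^{\,m-d}<1$'' is simply false when $m<d$: there $\beta_d^{\,m-d}=\beta_d^{-(d-m)}\approx 2^{\,d-m}$ is exponentially large. Even when $m>d$ the factor cannot be discarded: for $d=3$ one has $\alpha_3=2\cdot 5^{1/3}\approx 3.420$, $\beta_3=200^{-1/9}\approx 0.555$, and the non-excluded pair $(p,h)=(5,1)$ gives cost $\binom{4}{2}=6$ with $6\,\beta_3\approx 3.33\le 3.42$ --- the inequality holds by a margin under $3\%$, and only because of the exact value of $\beta_3$; bounding the $\beta$-factor by $1$ gives $6>\alpha_3$ and the induction does not close. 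Second, ``finitely many residual small configurations \dots\ by direct substitution'' is not available: the theorem quantifies over all $d$, and for each $d$ there are $\Theta(d)$ near-central constraints $\binom{2r}{r}\beta_d^{\,2r-d}\le\alpha_d$, $2\le r<d$, all of which are tight up to bounded factors (asymptotically the two sides differ by the factor $2/\sqrt{\pi r}$, and this computation already requires the second-order asymptotics $\beta_d\approx\tfrac12(\pi d/4)^{1/4d}$). A uniform-in-$d$ argument is unavoidable here, and your proposal does not contain one.

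This missing uniform argument is precisely the paper's key observation, which your proposal has no counterpart for: the constraint~(\ref{eq:genineqa}) for $(p,h)$ is \emph{implied} by the same constraint for the shifted pair $(p+1,h-1)$ together with the $\ell=2$ path constraint. Telescoping the shift reduces every $(p,h)$ to $(p+h,0)$, so only the $h=0$ slice ever needs to be checked, and the price of the reduction is the single $d$-\emph{independent} inequality $U(x,y)\le 1$ with $x=p-h$, $y=d-h$, where $U(x,y)=\tfrac14\binom{x}{y}^4\bigl(\tfrac{(x-y+1)(y+1)}{(x+1)(x+2)}\bigr)^{2x-3}$; proving this (maximum in $x$ at $x=2y$, monotone decay of $U(2y,y)$, a few boundary cases) is the technical core of the paper's proof. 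If you prefer to stay with your direct slice-by-slice verification, the remaining work can be stated equivalently: using the $\ell=2$ equality $\alpha_d^2\beta_d^{2d-3}=2$, your central-binomial constraint at $m=2r$ is equivalent to $\beta_d\le\bigl(2\binom{2r}{r}^{-2}\bigr)^{1/(4r-3)}$, so (since $\beta_d\le\bigl(2\binom{2d}{d}^{-2}\bigr)^{1/(4d-3)}$) it would suffice to prove that $r\mapsto\bigl(2\binom{2r}{r}^{-2}\bigr)^{1/(4r-3)}$ is nonincreasing for $r\ge 2$, plus an analogous statement for odd $m$ --- a genuine lemma of the same difficulty as the paper's $U(2y,y)$ analysis, not a finite check.
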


\begin{proof}
	The single vertex elimination step and the path elimination step result in the following inequalities:
	\begin{equation} \label{eq:genineqa}
		\alpha_d\,\beta_d^{d+h-p} \ge {p-h\choose d-h}
	\end{equation}
	for all $(p,h)\not\in\{(d+1,d-1),(d,d)\}$ with $p\ge d\ge h$, and 
	\begin{equation} \label{eq:genineqb}
		\alpha_d^\ell\,\beta_d^{\,d\ell-\ell-1} \ge 2
	\end{equation}
	for all $\ell\ge 2$. 
	In the second case the equalities are achieved with $\alpha_d=2^{d-1}$, $\beta_d=1/2$ for all $\ell$.
	The same $(\alpha_d,\beta_d)$-point gives equality in (\ref{eq:genineqa}) for $(p,d)\in\{(d,d-1),(d+1,d)\}$. 
	
	By taking the logarithm, (\ref{eq:genineqa}) and~(\ref{eq:genineqb}) become linear in the $(\ln\alpha_d,\ln\beta_d)$-plane. 
	The corresponding lines have negative slope and contain point $((d-1)\ln2,-\ln 2)$; the one defined by (\ref{eq:genineqb}) for $\ell=2$ is closest to the vertical.
	So the corresponding inequalities are dominated for \mbox{$\alpha_d\le 2^{d-1}$} by (\ref{eq:genineqb}) with $\ell=2$. 
	
	Our key observation is that (\ref{eq:genineqa}) for a relevant pair $(p,h)$ is satisfied if:
	\begin{enumerate}
		\item The same inequality is satisfied for the shifted $p\to p+1$, $h\to h-1$, giving
		$$
		\alpha_d\,\beta_d^{d+h-p-2} \ge {p-h+2\choose d-h+2} .
		$$
		\item Inequality~(\ref{eq:genineqb}) is satisfied for $\ell=2$.
	\end{enumerate}
	This implies that it is enough to consider~(\ref{eq:genineqa}) with $h=0$. 
	Considering \refpart{ii} with $\ell=2$ and a particular case of (\ref{eq:genineqa}) with $h=0$,
	the two inequalities can be raised to non-negative powers and combined so as to eliminate $\beta_d$, with the conclusion that
	\[
	\alpha_d^{2p-3}\ge 2^{p-d} \, {p\choose d}^{\!2d-3}.
	\]
	A permissible equality is achieved together with Equality~(\ref{eq:bd}). 
	The maximization in Equality~(\ref{eq:ad}) through $p\ge d$ follows.
	
	It remains to prove the key observation. 
	Inequalities \refpart{i}--\refpart{ii} can be raised to positive powers
	and combined, with the conclusion that
	\[
	\alpha_d\,\beta_d^{d+h-p} \ge {p-h+2\choose d-h+1}^{\frac{2p-2h-3}{2p-2h+1}} \, 2^{\frac{2}{2p-2h+1}}.
	\]
	Positivity fails for $(p,h)\in\{(d,d),(d,d-1),(d+1,d)\}$, but these cases are covered already.
	
	Comparison with Inequality~(\ref{eq:genineqa}) shows that we need $U(p-h,d-h)\le 1$, where
	\[
	U(x,y)=\frac{1}{4} \,{\,x\, \choose y}^{\!4} \,\left(\frac{(x-y+1)(y+1)}{(x+1)(x+2)} \right)^{\! 2x-3}.
	\]	
	The inequality has to be shown for integer $x\ge y\ge 0$ such that $(x,y) \notin \{(2,1),(0,0),(1,0),(1,1)\}$, which correspond respectively to vertices $(d+1,d-1), (d,d), (d+1,d)$, $(d,d-1)$.
	For fixed $y$, the maximum is achieved at $x=2y$, since $U(x,y)$ increases. 
	This follows from
	\begin{align*}
		\frac{U(x+1,y)}{U(x,y)} 
		= \frac{(y+1)^2}{(x-y+1)(x-y+2)}\left(1-\frac{1}{(x+2)^2}\right)\left(1-\frac{x-2y+1}{(x+3)(x-y+1)}\right)^{\!2x}. 
	\end{align*}
	This ratio is $< 1$ for $x\ge 2y$, and (by calculus on the product of the first and middle terms) it is $> 1$ for $x<2y$.
	
	For this pair of values, we have
	\[
	U(2y,y)=2 \prod_{k=1}^y \left(1-\frac{1}{k\,(2k+1)}\right)^{\!4k-3}.
	\]
	Evidently, the $x$-maximum $U(2y,y)$ is a decreasing function of $y$, and $U(4,2)=3^9/25000<1$. 
	For $y<2$, we observe that $U(x,1)\le U(3,1)=3^7/4000<1$ and $U(x,0)<U(1,0)=3/4<1$ for $x>2$.
\end{proof}

We now describe a construction similar to Section~\ref{sec:blam}, connecting Theorem~\ref{thm:bez} to the orientations of pseudographs with fixed outdegree $d$ and $n>1$.
Let $G(V,E)$ be a minimally rigid graph in $\CC^d$ and let $K_d$ be one of its subgraphs.
Removing $K_d$, as in the case of $d=2$, we have a pseudograph, denoted $L_{G,K_d}(U,F,H)$.

Applying Maxwell's condition to the edge count for the normal subgraphs of the pseudographs, we obtain $k_i\geq  \binom{d+1}{2}$ for every connected component of $L_{G,K_d}$ with at least $d$ vertices.

If $n_i\leq d-1$ we examine 2 different cases:  
(a) $n_i=1$ or $2$ implying that the connected component has trivially one orientation, since the normal subgraph is either a single vertex, or a tree with $2$ vertices.
The number of hanging edges is $k_i=2d-1$, in order to respect the total edge count.
(b) $n_i=3$, which implies $d\geq 4$, then there should be $\le 3$ normal edges and $\ge d-2$ hanging edges, so $k_i\geq 3d-3$.
By induction, for the $i$-th vertex, at least $i-1$ hanging edges shall be added.

Hence, if $L_{G,K_d}$ has $c'$ connected components with more than 2 vertices and a total of $k'$ hanging edges, then $k'\geq 3d-3$ for $d\geq 3$. 

An immediate consequence is the following theorem that generalizes Theorem~\ref{thm:aslam} by applying Theorem~\ref{thm:bez}.

\begin{theorem}
Let $G(V,E)$ be a minimally rigid graph in $\CC^d$ (and $S^d$) that contains a $K_d$.
Then the number of its embeddings is bounded from above by $\beta_d^{3d-2} \cdot(2\cdot a_d)^{|V|-d}= O((2\cdot a_d)^{|V|})$.
In the case of Geiringer graphs that contain a triangle this is 
$$
\left(2\cdot 10^2\right)^{-5/9}\left(8\cdot (5/8)^{1/3}\right)^{|V|-3} = O\left(6.8399^{|V|}\right).
$$
\end{theorem}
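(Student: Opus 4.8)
The plan is to run the disconnection argument of Theorem~\ref{thm:aslam} verbatim, but driven by the general outdegree-$d$ estimate of Theorem~\ref{thm:gen} instead of the planar one. Since $G$ contains a $K_d$, I would fix it and form the pseudograph $L_{G,K_d}(U,F,H)$ exactly as in the planar construction, so that $n=|U|=|V|-d$, and Theorem~\ref{thm:bez} reduces everything to bounding $B(G,K_d)$, the number of fixed-outdegree-$d$ orientations of $L_{G,K_d}$, with the embedding count at most $2^{|V|-d}\,B(G,K_d)$. The first observation is that $B(G,K_d)$ is multiplicative over the connected components of $L_{G,K_d}$, and that a component on at most two vertices has a tree as normal subgraph, hence a single valid orientation, contributing the factor~$1$; so only components with $n_i\ge 3$ vertices matter.

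On each such component $L_i$ (with $n_i$ vertices and $k_i$ hanging edges) Theorem~\ref{thm:gen} gives at most $\alpha_d^{\,n_i}\beta_d^{\,k_i-1}$ valid orientations, where $a_d=\alpha_d$. Into this I would feed the Maxwell-type bound established just before the statement: every component with $n_i\ge 3$ satisfies $k_i\ge 3d-3$. The extremal case is the triangle $n_i=3$, where the balance $|F_i|+k_i=d\,n_i$ together with $|F_i|\le\binom{3}{2}=3$ forces $k_i\ge 3d-3$, whereas larger components ($n_i\ge d$) are instead constrained by Maxwell's $k_i\ge\binom{d+1}{2}\ge 3d-3$. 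Since $\beta_d<1$, each factor is at most $\alpha_d^{\,n_i}\beta_d^{\,3d-4}$. Writing $c'\ge 1$ for the number of components with $n_i\ge 3$ and using $\alpha_d>1$ with $\sum_i n_i\le n$, the product is at most $\alpha_d^{\,n}\beta_d^{\,3d-4}$ (equivalently $\alpha_d^{\,n}\beta_d^{\,k'-c'}$ with $k'-c'\ge(3d-4)c'\ge 3d-4$). Multiplying by the $2^{|V|-d}$ of Theorem~\ref{thm:bez} yields
\[
2^{|V|-d}\,B(G,K_d)\;\le\;\beta_d^{\,3d-4}\,(2\alpha_d)^{|V|-d}\;=\;O\!\left((2\alpha_d)^{|V|}\right).
\]

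It remains to specialize to Geiringer graphs, $d=3$. The maximum in~(\ref{eq:ad}) is attained at $p=6$, giving $\alpha_3=(2^{3}\cdot 20^{3})^{1/9}=2\cdot 5^{1/3}$, so $2\alpha_3=8\cdot(5/8)^{1/3}$, while~(\ref{eq:bd}) gives $\beta_3=(2\cdot 20^{-2})^{1/9}=(2\cdot 10^{2})^{-1/9}$, whence $\beta_3^{\,3d-4}=\beta_3^{\,5}=(2\cdot 10^{2})^{-5/9}$. Substituting reproduces the displayed Geiringer bound $(2\cdot 10^2)^{-5/9}\,(8\cdot(5/8)^{1/3})^{|V|-3}=O(6.8399^{|V|})$. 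I note that this route produces the $\beta_d$-exponent $3d-4$, which is exactly the value matching the explicit constant $(2\cdot 10^2)^{-5/9}$ in the statement.

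The step I expect to be the main obstacle is not the final algebra but the uniform inequality $k_i\ge 3d-3$: one must separate the small components ($n_i\le d-1$, bounded by the complete-graph cap $|F_i|\le\binom{n_i}{2}$, with the triangle $n_i=3$ extremal) from the large ones ($n_i\ge d$, governed by $\binom{d+1}{2}$), and verify that $3d-3$ is genuinely the common minimum for every $d\ge 3$, which holds because $\binom{d+1}{2}\ge 3d-3$ there. A secondary point needing care is the degenerate regime $c'=0$, where every component has at most two vertices, $B(G,K_d)=1$, and the bound must be checked to collapse correctly to $2^{|V|-d}$, just as the $n=1$ case is handled separately in the proof of Theorem~\ref{thm:aslam}.
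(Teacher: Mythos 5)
Your proposal is correct and takes essentially the same route as the paper's own (rather sketchy) proof: form the pseudograph $L_{G,K_d}$, use multiplicativity of the orientation count over its connected components, feed the Maxwell-type bound $k_i\ge 3d-3$ for components with at least three vertices into Theorem~\ref{thm:gen}, and multiply by $2^{|V|-d}$ via Theorem~\ref{thm:bez} --- with the degenerate case $c'=0$ left in exactly the same unresolved state as in the paper itself. Your exponent $3d-4$ is indeed what this argument (and the paper's own derivation) yields, and it matches the displayed Geiringer constant $\left(2\cdot 10^2\right)^{-5/9}=\beta_3^{5}$, so the exponent $3d-2$ appearing in the general statement is an inconsistency in the paper, not a defect of your proof.
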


The asymptotic bound works also for minimally rigid graphs in $\CC^d$ that do not contain a $K_d$.
In that case, we may remove a maximal clique with $d'\geq 2$ vertices in order to obtain a pseudograph.
Then, the exponent of $a_d$ will never exceed $|V|-2$ (see Section~\ref{sec:mbez} for details).\\

Let us discuss succinctly the asymptotics of $\alpha_d$ in Equality~(\ref{eq:ad}).
We rely on the following consequence of Stirling's formula:
\begin{equation} \label{eq:cbinstir}
{2d+z\choose d} \sim  \frac{2^{2d+z}}{\sqrt{\pi d}} \left(  1+O\!\left(\frac{z^2}{d}\right)\right) ,
\end{equation}
for $z=\Omega(1)$ and $z=o(\sqrt{d})$ as $d\to\infty$.  
For $z=o(1)$, the error term would be $O(1/d)$.
We shall show that the maximum in Equality~(\ref{eq:ad}) is achieved at
\begin{equation} \label{eq:optloc}
p=\textstyle 2d+\frac12\ln d+O(1), 
\end{equation}
giving the following asymptotic expression for $\alpha_d$:
\begin{equation}\label{eq:asym}
\alpha_d\sim \sqrt{\frac12\,{2d\choose d}} \, \left(1+O\!
\left(\frac{(\ln d)^2}{d}\right)\right).
\end{equation}
The expression for $\alpha_d$ is close enough to the intuitive location $(p,h)=(2d,0)$ of the optimum, suggesting the same leading asymptotic term:
\begin{align*}
\left( 2^{d}\;{2d \choose d}^{\!-3/2} \right)^{\!1/(4d-3)} \! \sqrt{2d \choose d}
& \sim \left(\frac{\pi d}{4}\right)^{\!3/(16d-12)} \sqrt{\frac12\,{2d \choose d}}
\, \left( 1+O\!\left(\frac{1}{d}\right)\right)
\nonumber \\
& \sim  \sqrt{\frac12\,{2d \choose d}} \, \left( 1+O\!\left(\frac{\ln d}{d}\right)\right).
\end{align*} 

This is an improvement of Equation~(\ref{eq:bes}) from~\cite{bes} by the asymptotic factor of $1/\sqrt2$. 
To locate the maximum in Equality~(\ref{eq:ad}), let $\alpha_d(p)$ denote the function to be optimized.
We have
\[
\frac{\alpha_d(p)}{\alpha_d(p+1)}=W(p)^{\frac{2d-3}{(2p-1)(2p-3)}}
\]
with
\begin{align}
W(p) & =\frac12\,{\,p\, \choose d}^{\!2} \,\left(\frac{p-d+1}{p+1} \right)^{\! 2p-3} \\
& = \frac1{2\,(d+1)^{2d-3}} \; \prod_{k=d+1}^p \left( 1+\frac{d}{(k-d)(k+1)} \right)^{2k-3}. \nonumber
\end{align}
Evidently, $W(p)$ is increasing with $p\ge d$ from a value $<1$.
The logarithm
\begin{equation*}
\ln W(p)= -\ln2+2\ln{p\choose d}+(2p-3)\ln \left( 1-\frac{d}{p+1} \right)
\end{equation*}
increases continuously from a negative value to $\infty$, 
hence it has exactly one zero indicating the maximum of $\alpha_d(p)$ up to $O(1)$.
We find the zero assuming $p=2d+o(\sqrt{d})$ and using (\ref{eq:cbinstir}) again.
Then, the right-hand side is asymptotically
\[
2\ln2-\ln(\pi d)+2(z+1)+O\!\left(\frac{z^2}{d}\right),
\; \mbox{ with } z=p-2d.
\]
The optimal location for $p$ follows; see Equation~(\ref{eq:optloc}).

To demonstrate the improvement achieved by our new bound on the embedding number of rigid graphs, namely $(2\alpha_d)^{|V|-d}$, we refer the reader to Table~\ref{tab:bounds} which compares the values of $2\alpha_d$ to the power basis of existing bounds.

\section{Conclusions and future work}\label{sec:conc}

In this paper we introduce a method that bounds the number of outdegree constrained eliminations that are related to a recently proposed bound for the embeddings of minimally rigid graphs.
This method resulted in a new bound for the embeddings of all minimally rigid graphs with a given number of vertices, which was generalized as the first non-trivial upper bound in the cases of Laman and Geiringer graphs, while it also improved existing bounds in higher dimensions.

Our results give rise to certain open questions.
First of all, we would like to investigate how sharp our upper bound is on the number of pseudograph orientations and, subsequently, on the maximal number of embeddings of minimally rigid graphs.
Both of these may require large computational resources.

\paragraph{Acknowledgements}
	EB was fully supported and IZE was partially supported by project ARCADES which has received funding from the European Union’s Horizon 2020 research and innovation programme under the Marie Sk\l{}odowska-Curie grant agreement No~675789. 
	EB and IZE are members of team AROMATH, joint between INRIA Sophia-Antipolis, France, and NKUA.

\bibliography{upper_laman}

\end{document}